\documentclass[a4paper, 12pt]{amsart}

\usepackage{amssymb}
\usepackage{amsmath}
\usepackage{amscd}
\usepackage{graphicx}
\usepackage{amsthm}
\usepackage{color}
\usepackage{enumerate}
\usepackage[all]{xy}
\usepackage{hyperref}

\theoremstyle{plain}
\newtheorem{Theorem}{Theorem}[section]

\newtheorem{Lemma}[Theorem]{Lemma}

\theoremstyle{definition}
\newtheorem{Defi}[Theorem]{Definition}

\newtheorem{fact}[Theorem]{Fact}

\newtheorem*{mainthm}{Main Theorem}

\DeclareMathOperator{\reg}{reg}

\DeclareMathOperator{\ch}{char}

\def\NZQ{\mathbb}
\def\NN{{\NZQ N}}

\def\opn#1#2{\def#1{\operatorname{#2}}}
\opn\im{im}
\opn\mat{m}

\begin{document}

\title{A Characterization of Edge Ideals with $\reg(R/I(G)) = 3$}
\author{Akane Kanno}
\email{2520021750@campus.ouj.ac.jp}
\subjclass[2020]{
    Primary: 13D02,
    Secondary: 13F55, 05E40, 05C70.}
\keywords{Edge ideals, Castelnuovo--Mumford regularity.}

\maketitle

\begin{abstract}
Let $G$ be a graph and $I(G)$ its edge ideal.
In this paper, we give a complete characterization of the graphs $G$
for which $\reg(R/I(G)) = 3$.
\end{abstract}

\thispagestyle{empty}


\section*{Introduction}

Let $R = K[x_1, \ldots, x_n]$ denote the polynomial ring in $n$ variables over a field $K$,
graded by $\deg x_i = 1$.
Let $I \subset R$ be a homogeneous ideal with $\dim(R/I) = d$.
When the minimal graded free resolution of $R/I$ takes the form
\[
\mathbb{F}_{R/I} : 0 \to \bigoplus_{j \ge 0} R(-j)^{\beta_{p,j}}
\to \cdots
\to \bigoplus_{j \ge 0} R(-j)^{\beta_{1,j}}
\to R \to R/I \to 0,
\]
the \textbf{(Castelnuovo--Mumford) regularity} $\reg(R/I)$ of $R/I$ is defined by
\[
\reg(R/I) := \max\{ j - i \mid \beta_{i,j} \neq 0 \}.
\]

Let $G = (V(G), E(G))$ be a finite simple undirected graph with vertex set
$V(G) = \{x_1, \ldots, x_n\}$ and edge set $E(G)$.
Throughout this paper, all graphs are assumed to be finite, simple, and undirected.
Set $R = K[V(G)] = K[x_1, \ldots, x_n]$, and define the \textit{edge ideal} of $G$ by
\[
I(G) = \left( x_i x_j : \{x_i, x_j\} \in E(G) \right) \subset R.
\]

Edge ideals occupy a central position in commutative algebra and combinatorics,
and have been studied extensively.
Of particular interest is the relationship between algebraic invariants of edge ideals
and combinatorial invariants of graphs.
Among such invariants, the Castelnuovo--Mumford regularity of an edge ideal is one of
the most fundamental---alongside dimension and depth---and has attracted a great deal
of attention. (see, e.g., \cite{BBH}, \cite{H} and \cite{SF}). Representative results are as follows.

\begin{Theorem}[\cite{W} Lemma~7, \cite{HVT}, Theorem~6.7, and \cite{K}, Lemma~2.2]
Let $\mat(G)$ denote the matching number of $G$, and $\im(G)$ the induced matching number
of $G$. Then the following inequalities hold:
\[
\im(G) \leq \reg\!\left(R/I(G)\right) \leq \mat(G).
\]
\end{Theorem}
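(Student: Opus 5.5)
Both inequalities in the Theorem will be proved by standard reductions to induced subgraphs and short exact sequences, using Hochster's formula for the lower bound and induction on the number of vertices for the upper bound.

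\emph{Lower bound.} Let $\{e_1,\dots,e_r\}$ be an induced matching of $G$ with $r=\im(G)$, and let $H$ be the induced subgraph of $G$ on the $2r$ vertices of these edges. Since the matching is induced, $H$ is a disjoint union of $r$ edges. Hence $I(H)$ is a complete intersection of $r$ quadrics, and its Koszul resolution gives $\beta_{r,2r}(R/I(H))\neq 0$, so $\reg(R/I(H))=r$. It remains to show that regularity does not increase when passing to an induced subgraph. By Hochster's formula,
\[
\beta_{i,\sigma}(R/I(G))=\dim_K \tilde H_{|\sigma|-i-1}(\Delta(G)_\sigma;K),
\]
where $\Delta(G)$ is the independence complex and $\sigma\subseteq V(G)$. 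The restriction $\Delta(G)_W$ equals $\Delta(G[W])$, so every multigraded Betti number of $R/I(H)$ is also a Betti number of $R/I(G)$. Therefore $\reg(R/I(G))\ge \reg(R/I(H))=\im(G)$.

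\emph{Upper bound.} We induct on $|V(G)|$; if $G$ has no edges, both sides are $0$. Otherwise pick a vertex $x$ lying on some edge. We use the short exact sequence
\[
0\to R/(I(G):x)(-1)\xrightarrow{\cdot x} R/I(G)\to R/(I(G),x)\to 0,
\]
together with the identifications $(I(G),x)=(I(G\setminus x),x)$ and $(I(G):x)=(I(G\setminus N[x]),N(x))$. Killing variables that do not appear in an ideal does not change regularity, so the sequence gives
\[
\reg(R/I(G))\le \max\{\reg(R/I(G\setminus N[x]))+1,\ \reg(R/I(G\setminus x))\}.
\]
By induction the right-hand side is at most $\max\{\mat(G\setminus N[x])+1,\ \mat(G\setminus x)\}$. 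We always have $\mat(G\setminus x)\le\mat(G)$. For the other term, take a maximum matching of $G\setminus N[x]$ and add an edge $xy$ with $y\in N(x)$. This stays a matching because $G\setminus N[x]$ contains neither $x$ nor $y$, so $\mat(G\setminus N[x])+1\le\mat(G)$. This completes the induction.

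The main obstacle is the exact-sequence bookkeeping: one has to justify the regularity inequality $\reg B\le\max\{\reg A,\reg C\}$ for $0\to A\to B\to C\to 0$, and correctly track the shift $(-1)$ on the colon term. This is standard via the long exact sequence of $\operatorname{Tor}$. The lower bound needs only Hochster's formula.
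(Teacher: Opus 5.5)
\textbf{Verdict: your proof is correct.} The paper gives no proof of this statement; it is quoted from the literature. So the comparison is between a citation and your self-contained argument, which is the standard one and uses only ingredients the paper records in its preliminaries.

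\emph{Lower bound.} You apply Hochster's formula (Lemma~\ref{hoc}) to a restriction $\Delta(G)_W=\Delta(G_{|W})$. This is the same mechanism the paper later uses in the converse half of Theorem~\ref{main}. It shows that every multigraded Betti number of an induced subgraph is a Betti number of $G$, so regularity can only increase from $G_{|W}$ to $G$. Combined with the Koszul complex on the $r$ disjoint quadrics, this gives $\im(G)\le\reg(R/I(G))$.

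\emph{Upper bound.} Your argument runs on the exact sequence displayed just before Theorem~\ref{regbound}. You only need the crude inequality $\reg B\le\max\{\reg A,\reg C\}$, not the sharper dichotomy of Theorem~\ref{LemC}. The count $\mat(G\setminus N[x])+1\le\mat(G)$, which uses that $x$ is not isolated, correctly closes the induction.

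One small point in your favour concerns the colon and sum identifications. You write $(I(G):x)=(I(G\setminus N[x]),N(x))$ and $(I(G),x)=(I(G\setminus x),x)$. These are more accurate than the identifications in the proof of Lemma~\ref{linkreg}, which omit the linear generators. As you note, adjoining or killing variables that do not occur in the quadratic part leaves the regularity unchanged. You can see this because the Koszul complex on those variables is linear, or by Lemma~\ref{regadd} when the quadratic part is nonzero. So nothing in your argument is affected.
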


\begin{Theorem}[\cite{F}]\label{reg2}
For a graph $G$, $\reg(R/I(G)) = 1$ if and only if the complement $G^c$ is chordal.
\end{Theorem}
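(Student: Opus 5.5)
The approach is to pass from the algebra to topology through Hochster's formula, and then use the classical fact that a flag complex has vanishing reduced homology in positive degrees exactly when its $1$-skeleton satisfies a chordality condition.

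First, recall that $I(G)$ is the Stanley--Reisner ideal of the independence complex $\mathrm{Ind}(G)$. Equivalently, it is the Stanley--Reisner ideal of the clique complex $\Delta(G^c)$ of the complement. By Hochster's formula,
\[
\beta_{i,j}(R/I(G)) = \sum_{W\subseteq V(G),\ |W|=j} \dim_K \widetilde{H}_{j-i-1}\bigl(\Delta(G^c)_W;K\bigr).
\]
Hence $\reg(R/I(G)) \le 1$ if and only if $\widetilde{H}_k(\Delta(G^c)_W;K)=0$ for all $k\ge 1$ and all $W\subseteq V(G)$. Since induced subcomplexes of a clique complex are clique complexes of induced subgraphs, the statement reduces to a purely combinatorial claim about a graph $H=G^c$: $\widetilde{H}_k(\Delta(H_W))=0$ for all $k\ge1$ and all $W$ if and only if $H$ is chordal.

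For the direction ``$H$ not chordal $\Rightarrow$ $\reg\ge 2$'', take an induced cycle $C_m$ with $m\ge4$ in $H$ on the vertex set $W$. Its clique complex is the cycle itself, a $1$-dimensional circle. Thus $\widetilde{H}_1\neq0$ over any field, so $\beta_{i,m}\neq 0$ with $m-i=2$, and $\reg(R/I(G))\ge 2$. Equivalently, $G$ then contains an induced complement of $C_m$, which one can also check directly.

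For the converse, suppose $H$ is chordal. Every induced subgraph $H_W$ is then chordal, so it suffices to show that the clique complex of a chordal graph has vanishing reduced homology in degrees $\ge 1$. I would argue by induction on the number of vertices using Dirac's theorem that a chordal graph has a simplicial vertex $v$, i.e.\ one whose neighborhood $N(v)$ is a clique. Then
\[
\Delta(H)=\Delta(H-v)\cup \overline{\{v\}\cup N(v)},
\]
and the two pieces intersect in the full simplex on $N(v)$. That intersection is contractible when $N(v)\neq\emptyset$ and empty when $v$ is isolated. Mayer--Vietoris gives $\widetilde{H}_k(\Delta(H))\cong\widetilde{H}_k(\Delta(H-v))$ for $k\ge1$ in the first case. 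In the second case a disjoint point is added, which only changes $\widetilde{H}_0$. The induction then closes.

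The main obstacle is really only bookkeeping: translating Hochster's formula correctly, in particular the independence-complex versus clique-complex-of-complement identification, and the index shift $j-i-1$. The topological step via simplicial vertices is standard, so I would expect it to go through cleanly. Field independence follows from the same argument, since all homology computations are characteristic-free.
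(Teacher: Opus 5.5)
The paper gives no proof of this statement. It is quoted from Fr\"oberg and used as a black box, so there is no internal argument to compare yours with. Your proof is correct and is the standard one:
\begin{itemize}
\item Viewing $I(G)$ as the Stanley--Reisner ideal of the clique complex of $G^c$, Hochster's formula turns $\reg(R/I(G))\le 1$ into the vanishing of $\widetilde{H}_k$ for all $k\ge 1$ on every induced subcomplex.
\item An induced cycle $C_m$ with $m\ge 4$ in $G^c$ then yields $\beta_{m-2,m}(R/I(G))\neq 0$.
\item In the chordal case, Dirac's simplicial vertex $v$ drives the Mayer--Vietoris induction, since the two pieces meet in the full simplex on $N(v)$.
\end{itemize}
Your insistence on an \emph{induced} cycle is essential: for instance, $K_4$ contains a $4$-cycle but is chordal.

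One thing you should add. Your argument actually proves that $\reg(R/I(G))\le 1$ if and only if $G^c$ is chordal, and you pass silently from ``$\le 1$'' to ``$=1$''. That step needs $E(G)\neq\emptyset$. If $\{x,y\}\in E(G)$, take $W=\{x,y\}$: the restricted complex is two isolated points, so $\widetilde{H}_0\neq 0$, hence $\beta_{1,2}(R/I(G))\neq 0$ and $\reg(R/I(G))\ge 1$. If $G$ has no edges, then $I(G)=0$ and $\reg(R/I(G))=0$, while $G^c$ is complete and hence chordal. So the statement as written tacitly assumes $G$ has at least one edge, and your write-up should say so.
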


\begin{Theorem}[\cite{F}]\label{reg3}
For a graph $G$, $\reg(R/I(G)) \ge 2$ if and only if $G^c$ contains a cycle of length
at least $4$.
\end{Theorem}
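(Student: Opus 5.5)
The plan is to derive the statement directly from Theorem~\ref{reg2}, reading ``contains a cycle of length at least $4$'' as ``contains an \emph{induced} cycle of length at least $4$''. This is the only reading under which the statement can hold: the complete graph $K_4$ contains a $4$-cycle yet is chordal. With this reading, $G^c$ contains such a cycle exactly when $G^c$ is not chordal, by the definition of chordality. The statement therefore becomes: $\reg(R/I(G)) \ge 2$ if and only if $G^c$ is not chordal.

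First I will dispose of the degenerate case $E(G) = \emptyset$. Then $I(G) = 0$, so $R/I(G) = R$ has the trivial resolution and $\reg(R/I(G)) = 0$. On the other side, $G^c$ is the complete graph on $V(G)$. Every cycle in a complete graph of length at least $4$ has a chord, so $G^c$ has no induced cycle of length at least $4$. Both sides of the equivalence are false, and the statement holds in this case.

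Now suppose $E(G) \neq \emptyset$. Then $\im(G) \ge 1$, since a single edge is an induced matching. The first theorem quoted above then gives $\reg(R/I(G)) \ge 1$. Because the regularity is an integer, $\reg(R/I(G)) \ge 2$ holds exactly when $\reg(R/I(G)) \neq 1$. By Theorem~\ref{reg2}, this happens exactly when $G^c$ is not chordal, that is, when $G^c$ has an induced cycle of length at least $4$. This proves the equivalence.

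There is no real obstacle here. The only point needing care is the interpretation of ``cycle'' as an induced cycle, together with the edgeless case, where Theorem~\ref{reg2} alone does not cover regularity $0$. If one wants a self-contained argument avoiding Theorem~\ref{reg2}, the natural route is Hochster's formula. An induced cycle $C_m$ with $m \ge 4$ in $G^c$ is an induced subgraph of the clique complex of $G^c$ (the independence complex of $G$) whose geometric realization is a circle. Its reduced homology $\tilde H_1 \neq 0$ produces $\beta_{m-2,m}(R/I(G)) \neq 0$, and hence regularity at least $2$. The converse direction is the substantive content of Fröberg's theorem, which we simply cite.
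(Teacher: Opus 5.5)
Your argument is correct and is essentially what the paper relies on: the paper gives no proof of its own but cites Fr\"oberg, and your deduction from Theorem~\ref{reg2} (Fr\"oberg's chordality criterion), together with $\reg(R/I(G)) \ge 1$ whenever $E(G) \neq \emptyset$, is the standard immediate corollary. Reading ``cycle'' as ``induced cycle'' and treating the edgeless case separately are both necessary and correct, and this matches how the paper itself uses the result (through anticycles) in the proof of Theorem~\ref{main}.
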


While Theorems~\ref{reg2} and~\ref{reg3} are established,
the case $\reg(R/I(G)) \ge 3$ was only partially understood:
for bipartite graphs, the following result was known.

\begin{Theorem}[\cite{FRG}, Theorem~3.1]
Let $G$ be a bipartite graph. If $\reg(R/I(G)) \ge 3$, then the bipartite complement
$G^{bc}$ contains a cycle of length at least $6$.
\end{Theorem}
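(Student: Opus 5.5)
The plan is to use Hochster's formula to turn the regularity bound into a homology-vanishing statement, and then read off a long cycle in $G^{bc}$. I argue by contraposition. Write $V(G)=X\sqcup Y$ for the bipartition, and let $H=G^{bc}$. So $x\in X$ and $y\in Y$ are adjacent in $H$ exactly when $\{x,y\}\notin E(G)$. Let $\Delta=\mathrm{Ind}(G)$ be the independence complex, whose Stanley--Reisner ideal is $I(G)$. By Hochster's formula,
\[
\beta_{i,j}(R/I(G))=\sum_{W\subseteq V,\ |W|=j}\dim_K \widetilde H_{j-i-1}(\Delta_W;K).
\]
Hence $\reg(R/I(G))\ge 3$ if and only if $\widetilde H_r(\mathrm{Ind}(G[W]);K)\neq 0$ for some $W$ and some $r\ge 2$.

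The condition ``$G^{bc}$ has no cycle of length $\ge 6$'' is hereditary, because $G[W]^{bc}=H[W]$. It therefore suffices to prove the following: if $H$ has no cycle of length $\ge 6$, then $\widetilde H_r(\mathrm{Ind}(G))=0$ for all $r\ge 2$. Isolated vertices of $G$ make the complex a cone, so I may discard them.

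\textbf{Step 1 (suspension reduction).} The faces of $\mathrm{Ind}(G)$ are the sets $X'\cup Y'$ with $X'\subseteq X$, $Y'\subseteq Y$ and $X'\times Y'\subseteq E(H)$. So $\mathrm{Ind}(G)$ is the union of the two full simplices $2^X$ and $2^Y$ together with the bicliques of $H$. Let $\Gamma$ be the complex on $Y$ whose facets are the neighbourhoods $N_H(x)$ for $x\in X$. I expect that a Mayer--Vietoris argument, or equivalently the nerve lemma applied to the cover by the closed stars of the vertices of $X$ together with the simplex $2^Y$, gives
\[
\mathrm{Ind}(G)\simeq \Sigma\,\Gamma .
\]
Here one must keep track of the degenerate cases: vertices of $Y$ with no $H$-neighbour, and $\Gamma$ being empty. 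The reduction then becomes $\widetilde H_r(\mathrm{Ind}(G))\cong \widetilde H_{r-1}(\Gamma)$, so I need $\widetilde H_{q}(\Gamma)=0$ for all $q\ge 1$.

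\textbf{Step 2 (1-cycles give long cycles).} This step explains where the number $6$ comes from. Take an edge-cycle $y_1y_2\cdots y_ky_1$ in the $1$-skeleton of $\Gamma$ with $k\ge 3$, chosen so that no two consecutive pairs lie in a common facet and so that it is shortest with this property. For each $i$ pick $x_i\in X$ with $y_i,y_{i+1}\in N_H(x_i)$. Minimality forces the $x_i$ to be distinct and the $y_i$ to be distinct. Then
\[
y_1x_1y_2x_2\cdots y_kx_ky_1
\]
is a cycle in $H$ of length $2k\ge 6$. This shows why the hypothesis is sharp. However, it only controls $\pi_1$ and $H_1$ heuristically, so the higher homology needs a separate argument.

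\textbf{Step 3 (main obstacle: vanishing of all higher homology).} A bipartite graph with no cycle of length $\ge 6$ is in particular chordal bipartite. For such a graph I would invoke the standard elimination structure (Farber; Uehara): there is a \emph{weakly simplicial} vertex $y\in Y$. This means the sets $N_H(x)$, for $x\in N_H(y)$, form a chain under inclusion. I would then show that the link of $y$ in $\Gamma$ is a cone, namely over any vertex lying in the largest of these neighbourhoods. That gives $\Gamma\simeq\Gamma\setminus y$, where deleting $y$ corresponds to passing to $H-y$, which again has no long cycles.

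Inducting on $|Y|$, $\Gamma$ is homotopy equivalent to a complex on the empty set or a disjoint union of simplices. Hence $\widetilde H_q(\Gamma)=0$ for $q\ge 1$, and by Step 1, $\widetilde H_r(\mathrm{Ind}(G))=0$ for $r\ge 2$. This contradicts $\reg(R/I(G))\ge 3$.

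I expect the delicate points to be two. The first is verifying that the link of $y$ really is a cone, including the case where $y$ lies in only one facet. The second is the bookkeeping of the degenerate cases in the suspension equivalence of Step 1.
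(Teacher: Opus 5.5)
The paper does not prove this statement. It is quoted from \cite{FRG} as background, so there is no argument of the paper's to compare yours against. The paper's own machinery (vertex-minimal graphs, the deletion/link recursion of Lemma~\ref{linkreg}, and Hochster's formula only at the very end) is built for arbitrary graphs. Your proof is a self-contained topological argument for the bipartite case: Hochster's formula, a suspension decomposition, and elimination of a weakly simplicial vertex. It is correct.

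It also proves more than the quoted statement. Step~3 uses only that $G^{bc}$ is chordal bipartite, so you actually obtain an \emph{induced} cycle of length $\ge 6$. Conversely, an induced $C_{2k}$ ($k\ge 3$) in $G^{bc}$ on a vertex set $W$ gives $\Gamma\cong C_k$, hence $\widetilde H_2(\mathrm{Ind}(G[W]))\neq 0$ and $\reg(R/I(G))\ge 3$. So your route yields a complete criterion for bipartite graphs: $\reg(R/I(G))\ge 3$ if and only if $G^{bc}$ is not chordal bipartite. The paper's general strategy gives no criterion this explicit.

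A few points to tighten.

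\emph{Step~1.} Assume $X,Y\neq\emptyset$, since otherwise $\mathrm{Ind}(G)$ is a simplex. Put $A=\bigcup_{x\in X}\mathrm{st}(x)$ and $B=2^Y$. Because $\mathrm{Ind}(G)$ is flag and every $S\subseteq X$ is a face, $\bigcap_{x\in S}\mathrm{st}(x)=\{\sigma:\sigma\cup S\in\mathrm{Ind}(G)\}$ is a cone. By the nerve lemma $A$ is contractible. Also $A\cap B=\Gamma$, and Mayer--Vietoris gives $\widetilde H_r(\mathrm{Ind}(G))\cong\widetilde H_{r-1}(\Gamma)$. In the degrees $r\ge 2$ that you need, the degenerate cases you were worried about play no role.

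\emph{Step~3, the link.} The link of a weakly simplicial $y$ is in fact the full simplex on $N_H(x^*)\setminus\{y\}$, so $y$ always lies in exactly one facet. The genuinely degenerate case is $N_H(x^*)=\{y\}$. Then $y$ is an isolated point of $\Gamma$ and $\Gamma\not\simeq\Gamma\setminus y$, but $\widetilde H_q$ is unchanged for $q\ge 1$, which is all you use.

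\emph{Step~3, the choice of vertex.} You should justify that the weakly simplicial vertex can be taken in $Y$. The biadjacency matrix of a chordal bipartite graph (rows indexed by $X$, columns by $Y$) is totally balanced. It therefore admits a row and column ordering avoiding the $2\times 2$ submatrix with rows $(1,1)$ and $(1,0)$. In such an ordering, the rows with a $1$ in the first column have nested supports, so the vertex of $Y$ labelling that column is weakly simplicial.

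\emph{Step~2} is not needed for the proof.
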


In this paper, we extend this result to arbitrary graphs.

\begin{mainthm}[Theorem~\ref{main}]
Let $G$ be a graph.
\begin{enumerate}
  \item Over any field $k$, $\reg(R/I(G)) \ge 3$ if and only if $G^c$ contains
        a triangulation of an orientable closed $2$-manifold as a subgraph.
  \item Over a field $k$ with $\ch(k) = 2$, $\reg(R/I(G)) \ge 3$ if and only if
        $G^c$ contains a triangulation of a closed $2$-manifold (orientable or not)
        as a subgraph.
\end{enumerate}
\end{mainthm}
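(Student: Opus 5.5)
Via Hochster's formula, $\reg(R/I(G))\ge 3$ becomes a statement about second reduced homology of induced subcomplexes of the independence complex of $G$, which is the clique complex $\Delta=\Delta(G^c)$. The plan is to prove a homological criterion first and then translate it into the combinatorics of surfaces.

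\textbf{Step 1: the homological criterion.} Hochster's formula gives
\[
\beta_{i,j}(R/I(G))=\sum_{W\subseteq V,\ |W|=j}\dim_k \tilde H_{j-i-1}(\Delta_W;k).
\]
Hence $\reg(R/I(G))\ge r$ if and only if $\tilde H_{\ell}(\Delta_W;k)\neq 0$ for some $W$ and some $\ell\ge r-1$. For $r=3$ I first reduce to $\ell=2$, using that higher homology of a flag complex forces lower homology on some induced subcomplex. Concretely, if $\tilde H_\ell(\Delta_W)\ne 0$ for some $\ell\ge 3$, I take a vertex $v$ and the link $\Delta_{N(v)\cap W}$, which is again an induced flag subcomplex. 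By the Mayer--Vietoris sequence for $\Delta_W=\Delta_{W\setminus v}\cup \operatorname{star}(v)$, some such link or deletion carries homology in degree $\ell-1$. Inducting downward, I obtain $\tilde H_2(\Delta_{W'};k)\neq0$. So the criterion is: $\reg\ge 3$ if and only if $\tilde H_2(\Delta(G^c)_W;k)\ne 0$ for some $W$.

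\textbf{Step 2: the easy direction (surface $\Rightarrow$ $\reg\ge 3$).} Suppose $G^c$ contains, as a subgraph, the $1$-skeleton of a triangulation $T$ of a closed surface, orientable in case (1) or arbitrary in characteristic $2$ in case (2). I choose such a $T$ whose vertex set $W$ is minimal. The fundamental class of $T$ is the sum of its triangles, oriented in case (1) and taken mod $2$ in case (2); every triangle of $T$ is a clique of $G^c$, so this sum is a $2$-cycle in $\Delta_W$. The difficulty is that $\Delta_W$ may contain extra edges and $3$-simplices, so the class could become a boundary. I would handle this by choosing the surface minimal, or by choosing a $2$-cycle of minimal support, and arguing that a minimal nonzero $2$-cycle cannot bound: if it bounded, the extra edges or chords could be used to retriangulate a smaller surface.

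\textbf{Step 3: the hard direction (nonzero $\tilde H_2$ $\Rightarrow$ a surface).} Given $\tilde H_2(\Delta_W;k)\ne0$, I choose $W$ minimal with this property and a nonzero class $z$ whose support has the fewest triangles. The aim is to show that the support of $z$ is a pseudomanifold in which every edge lies in exactly two triangles, with the right coefficients, and then resolve singular vertices to obtain a genuine surface.
\begin{itemize}
\item The edge-degree condition is where the field enters. Over $\mathbb{Z}$-like coefficients a minimal cycle gives an orientable surface. Over $\mathbb{F}_2$ the even-degree condition at edges only yields a possibly nonorientable surface. In characteristic $\ne 2$, a minimal cycle cannot be supported on a nonorientable surface, since such a surface has $H_2=0$ over such fields.
\item Where an edge has degree $\ge 4$, or a vertex link is disconnected, I split the cycle into two cycles, one of which is still nonzero in homology. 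This contradicts minimality.
\end{itemize}
I expect Step 3, together with the non-bounding argument in Step 2, to be the main obstacle. Flag-ness of $\Delta$ must be used essentially: a link of a vertex in a minimal cycle is an induced cycle of length $\ge 4$ in $G^c$, which by Theorem~\ref{reg3} is itself a minimal $1$-dimensional obstruction. I would try to leverage this link structure, showing each vertex link is a single induced cycle of length at least $4$, to conclude the support is a closed surface.
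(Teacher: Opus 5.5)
Your Step~1 is sound: with $W$ chosen minimal, Mayer--Vietoris for $\Delta_W=\Delta_{W\setminus v}\cup(\text{star of }v)$ forces $\widetilde H_{\ell-1}$ of the link of $v$, which is an induced subcomplex, to be nonzero. The gap is Step~2, and it cannot be repaired, because the implication is false for surfaces that sit in $G^c$ only as subgraphs. Take $G^c=K_4$, the $1$-skeleton of the boundary of a tetrahedron. This is a triangulated $S^2$ that is already as small as possible. Then $G$ has no edges and $\reg(R/I(G))=0$: the fundamental class bounds the $3$-simplex of the clique complex, and there is no smaller surface to pass to. A less degenerate example is $G^c=K_{2,2,2}$ plus an edge joining two antipodal vertices. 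Its clique complex is the join of a $4$-cycle with an edge, hence contractible, and $G$ is two disjoint edges plus two isolated vertices, so $\reg(R/I(G))=2$. What is true is the version in which the clique complex of $G^c_{|W}$ \emph{is} the triangulated surface; then $\Delta(G)_W$ is $2$-dimensional and the fundamental class cannot bound. The paper's converse makes exactly the step you were worried about, asserting $\widetilde H_2(\Delta(G)_W;k)\supseteq\widetilde H_2(\Sigma;k)$, so it offers no fix.

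Step~3 breaks at the claim that a vertex with disconnected link lets you split a minimal cycle. Take a flag triangulation of $S^2$ with two vertices $p_1,p_2$ at graph distance at least $4$, and identify them. The result $P$ is a flag complex homotopy equivalent to $S^2\vee S^1$. Letting $G$ be the complement of its $1$-skeleton, $\Delta(G)=P$ and $\reg(R/I(G))=3$ over every field; $G$ is even vertex-minimal. Every edge of $P$ lies in exactly two triangles, and any two triangles are joined by a chain of triangles sharing edges. Hence every nonzero $2$-cycle has full support, so the cycle cannot be split, yet the link of the pinch point is two disjoint cycles.

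Worse, since $P$ is flag and $2$-dimensional, any closed surface whose $1$-skeleton is a subgraph of $G^c$ is a subcomplex of $P$, because its triangles are $3$-cliques. Each of its edges then lies in the same two triangles as in $P$, so it is all of $P$, which is not a manifold. Thus $G^c$ contains no triangulated closed surface, and the ``only if'' direction of both parts fails. By the same argument, taking $\Delta(G)$ to be a flag $\mathbb{RP}^2$ (e.g.\ a barycentric subdivision) gives $\reg(R/I(G))=3$ in characteristic $2$, while $G^c$ contains no orientable surface. So part~(1) cannot hold over every field; your orientability remark only covers $\ch(k)\neq 2$.

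In short, the Main Theorem as stated is false in both directions. The pinched sphere also contradicts the paper's central claim that $G_x$ is an anticycle for every vertex of a vertex-minimal $G$, so the paper's route does not close these gaps either. The statement must be reformulated before any proof can succeed; your Step~1 criterion is the correct invariant form.
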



\section{Preliminaries}

In this section we collect several lemmas needed for the proof of
Theorem~\ref{main}.

\subsection*{Graph theory}

We begin with a brief review of the graph-theoretic terminology used in this paper.

A graph $G$ is called \textit{bipartite} if there exists a partition
$V(G) = U \cup W$ with $U \cap W = \emptyset$ such that every edge
$e \in E(G)$ satisfies $e \cap U \neq \emptyset$ and $e \cap W \neq \emptyset$.
More generally, $G$ is called an \textit{$n$-partite graph} if there is a partition
$V(G) = U_1 \cup U_2 \cup \cdots \cup U_n$ into pairwise disjoint sets such that
for every edge $e \in E(G)$ there exist distinct indices $i \neq j$ with
$e \cap U_i \neq \emptyset$ and $e \cap U_j \neq \emptyset$.
An $n$-partite graph $G$ is called \textit{complete $n$-partite} if
$E(G) = \bigl\{\{x_i, x_j\} : x_i \in U_i,\, x_j \in U_j,\, i \neq j\bigr\}$,
and is denoted $K_{|U_1|, |U_2|, \ldots, |U_n|}$.
In particular, the complete tripartite graph $K_{2,2,2}$ is called the
\textit{octahedral graph}.

The \textit{complement} of a graph $G$ is the graph $G^c$ defined by
$V(G^c) = V(G)$ and
$E(G^c) = \bigl\{\{x_i, x_j\} : x_i, x_j \in V(G),\, i \neq j\bigr\} \setminus E(G)$.

For a bipartite graph $G$ with bipartition $V(G) = U \cup W$, the
\textit{bipartite complement} $G^{bc}$ is the graph with $V(G^{bc}) = V(G)$ and
$E(G^{bc}) = \bigl\{\{x_i, x_j\} : x_i \in U,\, x_j \in W\bigr\} \setminus E(G)$.

For a graph $G$ and a subset $W \subset V(G)$, the \textit{induced subgraph}
$G_{|W}$ is defined by $V(G_{|W}) = W$ and
$E(G_{|W}) = \bigl\{e = \{x,y\} \in E(G) : x, y \in W\bigr\}$.
When $W = V(G) \setminus \{x\}$, we write $G - x := G_{|W}$.

A \textit{walk} from $x_s$ to $x_g$ in $G$ is a sequence
$x_s x_{p_1} x_{p_2} \cdots x_{p_m} x_g$ such that \\
$\{x_s, x_{p_1}\}, \{x_{p_1}, x_{p_2}\}, \ldots, \{x_{p_m}, x_g\} \in E(G)$.
A graph $G$ is \textit{connected} if for any $x, y \in V(G)$ there exists a walk
from $x$ to $y$ in $G$.

A graph $G$ is \textit{$2$-connected} if $G - x$ is connected for every vertex $x$.
More generally, for an integer $n \ge 2$, $G$ is \textit{$n$-connected} if
$G - x$ is $(n-1)$-connected for every vertex $x$.

A walk $x_s x_{p_1} x_{p_2} \cdots x_{p_m} x_g$ with $x_s = x_g$ and all other
vertices distinct is called a \textit{cycle} of length $m+1$.
A cycle $C$ in $G$ is an \textit{induced cycle} if $G_{|V(C)} = C$.
A subset $U \subset V(G)$ is called an \textit{anticycle} of $G$ if
$(G_{|U})^c$ is an induced cycle of length at least $4$.

For vertices $x$ and $y$, we say $x$ and $y$ are \textit{adjacent} if
$\{x,y\} \in E(G)$.
The \textit{open neighborhood} of $x$ is
$N_G(x) = \{y \in V(G) : \{x,y\} \in E(G)\}$,
and the \textit{closed neighborhood} is $N_G[x] = N_G(x) \cup \{x\}$.
The \textit{degree} of $x$ is $d_G(x) = |N_G(x)|$. 
And we denote $G_x := G_{|V(G) \setminus N_G[x]}$. 

A set $M = \{e_1, \ldots, e_s\} \subset E(G)$ is a \textit{matching} in $G$ if
$e_i \cap e_j = \emptyset$ for all distinct $e_i, e_j \in M$.
A matching $M$ is an \textit{induced matching} if $G_{|V(M)} = M$ (i.e., $M$ forms
the entire edge set of the induced subgraph on $V(M) = \bigcup_{e \in M} e$).
The maximum size of a matching is the \textit{matching number} $\mat(G)$, and the
maximum size of an induced matching is the \textit{induced matching number} $\im(G)$.

For an edge $e = \{a,b\} \in E(G)$, the \textit{contraction} $G/e$ is the graph with
\[
V(G/e) = (V(G) \setminus \{a,b\}) \cup \{v\},
\]
\[
E(G/e) = E\!\left(G_{|V(G)\setminus\{a,b\}}\right)
         \cup \bigl\{\{v,w\} : w \in (N_G(a) \cup N_G(b)) \setminus \{a,b\}\bigr\}.
\]

\bigskip

\subsection*{Regularity of edge ideals and monomial ideals}

We next recall several useful results on the regularity of edge ideals and,
more generally, of monomial ideals.

The following bounds on regularity are well known.

\begin{Theorem}[\cite{HVT}, Theorem~6.7, and \cite{K}, Lemma~2.2]
\label{matchbound}
Let $\mat(G)$ be the matching number and $\im(G)$ the induced matching number of $G$.
Then
\[
\im(G) \leq \reg\!\left(R/I(G)\right) \leq \mat(G).
\]
\end{Theorem}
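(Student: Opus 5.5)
The plan is to prove the two inequalities separately, working with the independence complex $\Delta(G)$ and the standard reduction steps for edge ideals.

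\textbf{Lower bound.} Let $M=\{e_1,\dots,e_s\}$ be an induced matching with $s=\im(G)$, and set $W=V(M)$. Since $G_{|W}$ is exactly $M$, we have
\[
I(G_{|W}) = (x_{a_1}x_{b_1},\dots,x_{a_s}x_{b_s}),
\]
a complete intersection of $s$ quadrics. Its Koszul complex is the minimal resolution, so the top shift is $\beta_{s,2s}\neq 0$ and $\reg(R/I(G_{|W}))=2s-s=s$. It remains to pass from $G_{|W}$ to $G$. I would use restriction to an induced subgraph. By Hochster's formula,
\[
\beta_{i,j}(R/I(G))=\sum_{|U|=j}\dim \tilde H_{j-i-1}\bigl(\Delta(G)_{|U};K\bigr),
\]
and $\Delta(G_{|W})=\Delta(G)_{|W}$. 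Every nonzero Betti number of the induced subgraph therefore also appears for $G$, which gives $\reg(R/I(G))\ge \reg(R/I(G_{|W}))=\im(G)$.

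\textbf{Upper bound.} I would induct on the number of vertices, using the short exact sequence
\[
0\to R/(I(G):x)(-1)\to R/I(G)\to R/(I(G),x)\to 0,
\]
which yields
\[
\reg(R/I(G))\le\max\{\reg(R/(I(G):x))+1,\ \reg(R/(I(G),x))\}.
\]
Here $(I(G),x)$ corresponds to $G-x$, and $(I(G):x)=(N_G(x))+I(G_x)$. The second gives $\reg(R/(I(G):x))=\reg(R/I(G_x))$, since killing variables does not change regularity. This leads to
\[
\reg(R/I(G))\le\max\{\reg(G_x)+1,\ \reg(G-x)\}.
\]
The key step is choosing $x$ well. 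If $x$ is covered by a maximum matching, then $\mat(G-x)\le\mat(G)$ holds trivially. The harder requirement is $\mat(G_x)+1\le \mat(G)$: any matching of $G_x$ together with an edge $\{x,y\}$, $y\in N_G(x)$, is again a matching, because $G_x$ avoids $N_G[x]$. So if $x$ is non-isolated, then $\mat(G_x)+1\le\mat(G)$. Isolated vertices can be discarded without changing anything, so I would pick any non-isolated $x$ and conclude by induction. The base case is edgeless $G$, where the regularity is $0$.

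The main obstacle I expect is justifying the colon-ideal identification and that regularity is unchanged by adjoining the linear forms in $N_G(x)$. Both are standard facts, obtained by tensoring with a Koszul complex on variables not appearing in $I(G_x)$.
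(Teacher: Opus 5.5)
The paper does not prove this statement; it quotes it from \cite{HVT} and \cite{K} and uses it as a black box. Your proof is correct and self-contained, and it uses only tools that the paper itself lists in its preliminaries.

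For the lower bound, you restrict to $W=V(M)$ and apply Hochster's formula (Lemma~\ref{hoc}) with $\Delta(G_{|W})=\Delta(G)_{|W}$. This gives $\beta_{i,j}(K[W]/I(G_{|W}))\le\beta_{i,j}(R/I(G))$ term by term, and the complete-intersection computation then supplies $\beta_{s,2s}\neq 0$. This is the same restriction mechanism the paper uses in the converse direction of Theorem~\ref{main}.

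For the upper bound, you induct via Theorem~\ref{regbound} and the vertex decomposition behind Lemma~\ref{linkreg}. You combine this with $\mat(G-x)\le\mat(G)$ and $\mat(G_x)+1\le\mat(G)$ for a non-isolated $x$, and the argument is sound.

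Two minor remarks:
\begin{enumerate}
\item The proviso that $x$ be covered by a maximum matching is unnecessary, since $\mat(G-x)\le\mat(G)$ holds for every vertex.
\item Your identification $(I(G):x)=(N_G(x))+I(G_x)$ is the precise one. The proof of Lemma~\ref{linkreg} writes $(I(G):x)=I(G_{|V(G)\setminus N_G[x]})$, dropping the linear forms. Your Koszul-complex observation (equivalently Lemma~\ref{regadd}, with the case $I(G_x)=0$ trivial) is exactly what shows this omission does not change the regularity.
\end{enumerate}

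What your route buys over the paper's bare citation is that both bounds become short consequences of the paper's own preliminary results.
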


In particular, if $G^c = K_{2,2,2}$ then $\mat(G) = \im(G) = 3$, so
$\reg(R/I(G)) = 3$.

The following lemma on the regularity of short exact sequences is standard.

\begin{Lemma}[\cite{Peeva}, Proposition~18.6]\label{regslemma}
Let
\[
0 \rightarrow U \rightarrow U' \rightarrow U'' \rightarrow 0
\]
be a short exact sequence of graded finitely generated $R$-modules with
degree-zero homomorphisms. Then
\begin{enumerate}
    \item If $\reg(U') > \reg(U'')$, then $\reg(U) = \reg(U')$.
    \item If $\reg(U') < \reg(U'')$, then $\reg(U) = \reg(U'') + 1$.
    \item If $\reg(U') = \reg(U'')$, then $\reg(U) \le \reg(U'') + 1$.
\end{enumerate}
\end{Lemma}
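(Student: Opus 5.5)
This is the standard long exact sequence argument for Castelnuovo--Mumford regularity. I would work with local cohomology, or equivalently with Tor against $K$. Using Tor, for a finitely generated graded module $M$ we have
\[
\reg(M)=\max\{\,j-i : \operatorname{Tor}_i^R(M,K)_j\neq 0\,\},
\]
which agrees with the definition in the introduction for $M=R/I$.

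Apply $-\otimes_R K$ to $0\to U\to U'\to U''\to 0$. This gives a long exact sequence of graded $K$-vector spaces, and because the maps have degree zero it splits degree by degree:
\[
\cdots\to \operatorname{Tor}_{i+1}(U'',K)_j\to \operatorname{Tor}_i(U,K)_j\to \operatorname{Tor}_i(U',K)_j\to \operatorname{Tor}_i(U'',K)_j\to \operatorname{Tor}_{i-1}(U,K)_j\to\cdots
\]
Exactness at $\operatorname{Tor}_i(U,K)_j$ gives the basic bound
\[
\reg(U)\le \max\{\reg(U'),\,\reg(U'')+1\},
\]
since $(j)-(i+1)\le \reg(U'')$ forces $j-i\le\reg(U'')+1$. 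Part (3) follows at once.

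\emph{Part (1).} Assume $\reg(U')>\reg(U'')$. The bound above gives $\reg(U)\le\reg(U')$. For the reverse inequality, choose $(i,j)$ with $j-i=\reg(U')$ and $\operatorname{Tor}_i(U',K)_j\neq0$. Then $\operatorname{Tor}_i(U'',K)_j=0$, because $j-i>\reg(U'')$. Exactness therefore makes $\operatorname{Tor}_i(U,K)_j\to\operatorname{Tor}_i(U',K)_j$ surjective, so $\operatorname{Tor}_i(U,K)_j\ne0$ and $\reg(U)\ge\reg(U')$.

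\emph{Part (2).} Assume $\reg(U')<\reg(U'')=:r$. Pick $(i,j)$ with $j-i=r$ and $\operatorname{Tor}_i(U'',K)_j\ne0$. Then $\operatorname{Tor}_i(U',K)_j=0$, so the connecting map $\operatorname{Tor}_i(U'',K)_j\to\operatorname{Tor}_{i-1}(U,K)_j$ is injective. Hence $\operatorname{Tor}_{i-1}(U,K)_j\neq 0$, and $j-(i-1)=r+1$, giving $\reg(U)\ge r+1$. The basic bound gives $\reg(U)\le\max\{\reg(U'),r+1\}=r+1$, so $\reg(U)=r+1$.

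The one genuine subtlety is the case $i=0$ in part (2). If the extremal Tor of $U''$ sits in homological degree $0$, there is no $\operatorname{Tor}_{-1}(U,K)$ to receive the connecting map. In that case exactness of $\operatorname{Tor}_0(U',K)_j\to \operatorname{Tor}_0(U'',K)_j\to 0$ forces $\operatorname{Tor}_0(U',K)_j\ne0$, so $\reg(U')\ge j=r$, contradicting $\reg(U')<r$. So this case cannot occur and the argument goes through.
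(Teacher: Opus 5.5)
Your proof is correct. The paper gives no argument of its own for this lemma; it only cites Peeva, Proposition~18.6, and your degree-by-degree analysis of the long exact $\operatorname{Tor}$ sequence is the standard argument behind that result. Your separate treatment of the boundary case $i=0$ in part (2), via right exactness of $-\otimes_R K$, is handled properly.
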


Consider the following short exact sequence, where $I$ is an ideal in $R$ and $x$ is a
variable:
\[
0 \rightarrow \frac{R}{(I : x)}(-1) \rightarrow \frac{R}{I} \rightarrow \frac{R}{(I, x)} \rightarrow 0.
\]
Applying Lemma~\ref{regslemma} to this sequence gives the following bound.

\begin{Theorem}\label{regbound}
For an ideal $I \subset R = k[x_1, \dots, x_n]$ and a variable $x$,
\[
\reg\!\left(\frac{R}{I}\right) \le \max\!\left\{\reg\!\left(\frac{R}{(I,x)}\right),\, \reg\!\left(\frac{R}{(I:x)}\right) + 1\right\}.
\]
\end{Theorem}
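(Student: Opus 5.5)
The statement to prove is Theorem~\ref{regbound}. The plan is to read off the bound from the short exact sequence
\[
0 \rightarrow \frac{R}{(I : x)}(-1) \xrightarrow{\ \cdot x\ } \frac{R}{I} \rightarrow \frac{R}{(I, x)} \rightarrow 0
\]
by applying Lemma~\ref{regslemma} with $U' = R/I$ as the middle term, in place of the labelling used in the lemma as stated.

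First I would check that the sequence is exact and that its maps have degree zero. The map $R/(I:x)(-1) \to R/I$ is multiplication by $x$. It is well defined and injective, since $xf \in I$ holds exactly when $f \in (I:x)$. The twist by $-1$ makes it degree-preserving. Its image is $(I,x)/I$, which is the kernel of the natural surjection $R/I \to R/(I,x)$. So the sequence is exact, and every module in it is graded and finitely generated.

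Next I would use the standard fact, equivalent to the three cases of Lemma~\ref{regslemma} with the roles of the modules permuted, that regularity is subadditive on the middle term:
\[
\reg(U') \le \max\{\reg(U), \reg(U'')\}.
\]
This follows from the long exact sequence of $\operatorname{Tor}_i^R(-,k)$. Taking the degree-$j$ part, we have the exact piece
\[
\operatorname{Tor}_i(U,k)_j \to \operatorname{Tor}_i(U',k)_j \to \operatorname{Tor}_i(U'',k)_j .
\]
If $\beta_{i,j}(U') \ne 0$, then $\beta_{i,j}(U)\neq 0$ or $\beta_{i,j}(U'')\neq 0$. Hence
\[
j - i \le \max\{\reg U, \reg U''\}.
\]

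Finally I would substitute $U = R/(I:x)(-1)$ and $U'' = R/(I,x)$. Shifting a module by $-1$ raises every degree $j$ by one, so
\[
\reg\bigl(R/(I:x)(-1)\bigr) = \reg\bigl(R/(I:x)\bigr) + 1.
\]
This gives the claimed inequality.

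No step is a real obstacle. The only care needed is the degree bookkeeping in the twist, and deriving the middle-term bound either from the Tor sequence or from the cases of Lemma~\ref{regslemma}.
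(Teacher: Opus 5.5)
Your proposal is correct and follows the paper's route: the same short exact sequence $0 \to R/(I:x)(-1) \to R/I \to R/(I,x) \to 0$ combined with Lemma~\ref{regslemma}, with the twist supplying the $+1$. Your Tor argument for $\reg(U') \le \max\{\reg(U), \reg(U'')\}$ spells out what the paper leaves implicit. This bound also follows directly from the case split in Lemma~\ref{regslemma} with no relabelling needed: since $U'$ is already the middle term there, case (1) gives $\reg(U') = \reg(U)$ whenever $\reg(U') > \reg(U'')$.
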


For monomial ideals, equality holds:

\begin{Theorem}[\cite{DHS}, Lemma~2.10]\label{LemC}
For a monomial ideal $I \subset R = k[x_1, \dots, x_n]$ and a variable $x$,
\[
\reg\!\left(\frac{R}{I}\right) = \reg\!\left(\frac{R}{(I,x)}\right)
\quad\text{or}\quad
\reg\!\left(\frac{R}{I}\right) = \reg\!\left(\frac{R}{(I:x)}\right) + 1.
\]
\end{Theorem}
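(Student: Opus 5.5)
The plan is to run the short exact sequence
\[
0 \rightarrow \frac{R}{(I : x)}(-1) \rightarrow \frac{R}{I} \rightarrow \frac{R}{(I, x)} \rightarrow 0
\]
through Lemma~\ref{regslemma}, once one extra inequality is available for monomial ideals:
\[
\reg\!\left(\frac{R}{(I,x)}\right) \le \reg\!\left(\frac{R}{I}\right). \qquad (\ast)
\]
Given $(\ast)$, the statement follows from a dichotomy. Either $\reg(R/I) = \reg(R/(I,x))$, and we are done. Or $\reg(R/I) > \reg(R/(I,x))$. In the second case Lemma~\ref{regslemma}(1), applied with $U' = R/I$ and $U'' = R/(I,x)$, gives
\[
\reg\!\left(\frac{R}{I}\right) = \reg\!\left(\frac{R}{(I:x)}(-1)\right) = \reg\!\left(\frac{R}{(I:x)}\right) + 1.
\]
So the only real work is $(\ast)$.

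To prove $(\ast)$, I would write $R' = k[\text{variables other than } x]$ and let $I' \subset R'$ be the monomial ideal generated by the minimal generators of $I$ not divisible by $x$. Then $R/(I,x) \cong R'/I'$. Since $x$ does not occur in $I'$, the regularity of $R'/I'$ is the same whether it is computed over $R'$ or as the $R$-module $R/(I,x)$. Hence it suffices to prove the restriction statement
\[
\beta_{i,\mathbf a}(R'/I') = \beta_{i,\mathbf a}(R/I) \quad \text{for every } \mathbf a \in \NN^n \text{ with } a_x = 0.
\]
Taking the maximum of $|\mathbf a| - i$ over nonzero Betti numbers then gives $(\ast)$, because the nonzero multigraded Betti numbers of $R'/I'$ form a subset of those of $R/I$.

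For the restriction statement I would compute multigraded Betti numbers as Koszul homology, $\beta_{i,\mathbf a}(R/I) = \dim_k H_i(\mathbf x; R/I)_{\mathbf a}$. In multidegree $\mathbf a$ with $a_x = 0$, every term of the Koszul complex of $R/I$ involves the wedge basis elements $e_F$ and the components $(R/I)_{\mathbf a - \mathbf e_F}$. Any term whose index set $F$ contains $x$ would need the $x$-exponent of $\mathbf a - \mathbf e_F$ to be $-1$, so those terms vanish. The surviving terms use only components $(R/I)_{\mathbf b}$ with $b_x = 0$. In such degrees, a monomial $m$ of $R$ lies in $I$ if and only if it lies in $I'R$, because every generator of $I$ dividing $m$ is $x$-free. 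Therefore $(R/I)_{\mathbf b} = (R'/I')_{\mathbf b}$, and the two Koszul complexes agree in multidegree $\mathbf a$, which gives equal homology.

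Checking this degreewise identification carefully is the main, though mild, obstacle. If that argument becomes awkward, the fallback is to take an $\NN^n$-graded minimal free resolution of $R/I$ and restrict it to the multidegrees with $a_x = 0$. This restriction is exact and yields the minimal resolution of $R'/I'$, which again gives the restriction statement.
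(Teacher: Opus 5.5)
Your proof is correct and follows the standard route for this fact. The key input is the restriction inequality $\reg(R/(I,x))\le\reg(R/I)$, which you obtain by matching the Koszul complexes of $R/I$ and $R'/I'$ multidegree by multidegree for $a_x=0$; this works for arbitrary, not necessarily squarefree, monomial ideals, and it upgrades the bound of Theorem~\ref{regbound} to the dichotomy via Lemma~\ref{regslemma}(1). The paper itself gives no proof and only cites \cite{DHS}, so your argument supplies the step the paper takes for granted.
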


In the case of edge ideals, this specializes as follows.

\begin{Lemma}\label{linkreg}
Let $G$ be a graph with edge ideal $I(G)$. For any vertex $x \in V(G)$, exactly one
of the following equalities holds:
\begin{enumerate}
    \item $\reg\!\left(\dfrac{R}{I(G)}\right) = \reg\!\left(\dfrac{R}{I(G-x)}\right).$
    \item $\reg\!\left(\dfrac{R}{I(G)}\right) = \reg\!\left(\dfrac{R}{I(G_{|V(G)\setminus N_G[x]})}\right) + 1.$
\end{enumerate}
\end{Lemma}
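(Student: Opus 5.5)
The plan is to specialize Theorem~\ref{LemC} to $I = I(G)$ and an arbitrary vertex $x \in V(G)$. The work is to identify the two auxiliary ideals $(I(G),x)$ and $(I(G):x)$ as edge ideals of smaller graphs, up to adding variables. Adding variables to an ideal does not change the regularity of the quotient.

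\emph{The ideal $(I(G),x)$.} Adding $x$ kills every generator $xy$ with $y \in N_G(x)$. Hence
\[
(I(G),x) = (I(G-x), x),
\]
where $I(G-x)$ is generated in the variables other than $x$. Then $R/(I(G),x) \cong R'/I(G-x)$ with $R' = K[V(G)\setminus\{x\}]$. Regularity is unchanged under this identification: a minimal free resolution over $R'$ extends flatly to $R$, and the Koszul complex on $x$ is linear. So $\reg(R/(I(G),x)) = \reg(R/I(G-x))$.

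\emph{The colon ideal $(I(G):x)$.} For monomial ideals, the colon by a variable is generated by $m/\gcd(m,x)$ for the generators $m$. The generator $xy$ gives $y$, and a generator not involving $x$ stays as it is. Therefore
\[
(I(G):x) = \bigl(N_G(x)\bigr) + I(G-x).
\]
Any generator $yz$ with $y \in N_G(x)$ is redundant modulo the variable $y$. Thus
\[
(I(G):x) = \bigl(N_G(x)\bigr) + I\bigl(G_{|V(G)\setminus N_G[x]}\bigr).
\]
The variable $x$ does not occur in this ideal. The quotient is therefore the quotient by an edge ideal in the variables $V(G)\setminus N_G[x]$, tensored with the polynomial ring $K[x]$ and with the variables in $N_G(x)$ killed. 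Neither operation changes regularity. Hence $\reg(R/(I(G):x)) = \reg\bigl(R/I(G_{|V(G)\setminus N_G[x]})\bigr)$, where we abuse notation by regarding the ideal over the appropriate ring.

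Substituting both identifications into Theorem~\ref{LemC} shows that at least one of (1) and (2) holds.

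\emph{The word ``exactly''.} The statement needs more than ``at least one'': one must rule out (1) and (2) holding simultaneously. The only real obstacle is that ``exactly one'' does not follow from Theorem~\ref{LemC} alone.

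The natural approach combines Lemma~\ref{regslemma} with the short exact sequence
\[
0 \to \frac{R}{(I:x)}(-1) \to \frac{R}{I} \to \frac{R}{(I,x)} \to 0.
\]
The shifted term has regularity $r_2 + 1$, where $r_1 = \reg(R/I(G-x))$ and $r_2 = \reg\bigl(R/I(G_{|V(G)\setminus N_G[x]})\bigr)$.

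Note that $G_{|V(G)\setminus N_G[x]}$ is an induced subgraph of $G-x$. Regularity is monotone under induced subgraphs (restriction of Betti numbers, Hochster), so $r_2 \le r_1$.

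Equality of the two candidate values means $r_1 = r_2 + 1$. This can genuinely happen, for instance when $x$ is an isolated vertex of a suitable graph, where both descriptions give the same number. So ``exactly one'' should be read as ``one of the two holds'', with the two possibly coinciding numerically. I would present the proof establishing that at least one holds, and remark that when $r_1 \ne r_2+1$ precisely one equality can be true.
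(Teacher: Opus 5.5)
Your proof follows the paper's proof. You identify $(I(G),x)$ and $(I(G):x)$ with the edge ideals of $G-x$ and $G_{|V(G)\setminus N_G[x]}$ and apply Theorem~\ref{LemC}. You are more careful than the paper at this point. The paper writes $(I(G):x)=I(G_{|V(G)\setminus N_G[x]})$ and $(I(G),x)=I(G-x)$ as equalities of ideals. But the left-hand sides also contain the variables of $N_G(x)$, respectively $x$. Your remark that these variable generators do not change the regularity of the quotient is what makes the identifications valid at the level of $\reg$.

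You are also right that ``exactly one'' does not follow from Theorem~\ref{LemC}. The paper's one-line proof likewise yields only ``at least one''. However, your example of the two alternatives coinciding is wrong. If $x$ is isolated, then $N_G[x]=\{x\}$, so $G_{|V(G)\setminus N_G[x]}=G-x$ and $r_1=r_2$. Then $r_1\neq r_2+1$: (1) holds and (2) fails.

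A correct example is the path with edges $\{x,y\}$ and $\{y,z\}$:
\begin{itemize}
\item $I(G)=y(x,z)$ has minimal resolution $0\to R(-3)\to R(-2)^2\to R$, so $\reg(R/I(G))=1$.
\item $G-x$ is the edge $\{y,z\}$, so $r_1=1$.
\item $G_{|V(G)\setminus N_G[x]}$ is the single vertex $z$, so $r_2=0$.
\end{itemize}
Both (1) and (2) hold, so the lemma as stated, with ``exactly'', is false. With this example in place of yours, your conclusion is correct. The right statement is ``at least one'', and precisely one holds when $r_1\neq r_2+1$. None of the applications of Lemma~\ref{linkreg} in the proof of Theorem~\ref{main} relies on the uniqueness part, so this correction affects nothing downstream.
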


\begin{proof}
Since $I(G)$ is generated by quadratic monomials, we have
$(I(G) : x) = I(G_{|V(G) \setminus N_G[x]})$ and $(I(G), x) = I(G - x)$,
from which the result follows by Theorem~\ref{LemC}.
\end{proof}

The following lemma will also be needed.

\begin{Lemma}[\cite{HT}, Lemma~3.2] \label{regadd}
	Let $R_1 = K[x_1, \dots , x_m]$ and $R_2 = K[y_1, \dots, y_n]$ be polynomial rings over a field $K$. Let $I_1$ be a nonzero homogeneous ideal of $R_1$ and $I_2$ that of $R_2$.
	Let $R = R_1 \otimes_K R_2 = K[x_1, \dots, x_m, y_1, \dots, y_n]$ and regard $I_1 + I_2$ as a homogeneous ideal of $R$. Then
	$\reg(R/(I_1 + I_2)) = \reg(R_1/I_1) + \reg(R_2/I_2)$. 
\end{Lemma}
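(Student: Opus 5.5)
We need to prove Lemma \ref{regadd}: $\reg(R/(I_1+I_2)) = \reg(R_1/I_1)+\reg(R_2/I_2)$. The plan is to realize the minimal free resolution of $R/(I_1+I_2)$ as a tensor product of the minimal resolutions of $R_1/I_1$ and $R_2/I_2$, and then read the graded Betti numbers off a Künneth-type formula.

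First, let $\mathbb{F}$ be the minimal graded free resolution of $R_1/I_1$ over $R_1$ and $\mathbb{G}$ that of $R_2/I_2$ over $R_2$. Consider the complex $\mathbb{F}\otimes_K\mathbb{G}$. Its terms are $(\mathbb{F}\otimes_K\mathbb{G})_i=\bigoplus_{p+q=i}F_p\otimes_K G_q$, and each summand is a free $R=R_1\otimes_K R_2$-module, since $R_1(-a)\otimes_K R_2(-b)\cong R(-a-b)$.

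Second, we show this complex is acyclic with $H_0=R/(I_1+I_2)$. Because $K$ is a field, the Künneth formula over $K$ gives
\[
H_i(\mathbb{F}\otimes_K\mathbb{G})=\bigoplus_{p+q=i}H_p(\mathbb{F})\otimes_K H_q(\mathbb{G}).
\]
This vanishes for $i>0$. For $i=0$ it equals $R_1/I_1\otimes_K R_2/I_2\cong R/(I_1R+I_2R)$. The last identification holds because $R_1/I_1\otimes_K R_2/I_2$ is the quotient of $R_1\otimes_K R_2$ by $I_1\otimes R_2+R_1\otimes I_2$, which follows from right exactness of the tensor product.

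Third, we check minimality. The differentials of $\mathbb{F}$ and $\mathbb{G}$ have entries in the homogeneous maximal ideals $\mathfrak m_1$ and $\mathfrak m_2$. The differential of the tensor complex is $d_{\mathbb F}\otimes1\pm1\otimes d_{\mathbb G}$, so its entries lie in $\mathfrak m=(\mathfrak m_1,\mathfrak m_2)R$. Hence the tensor complex is minimal, and its graded Betti numbers are
\[
\beta^R_{i,j}(R/(I_1+I_2))=\sum_{p+q=i}\ \sum_{a+b=j}\beta^{R_1}_{p,a}(R_1/I_1)\,\beta^{R_2}_{q,b}(R_2/I_2).
\]

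Finally, we compute the regularity. A nonzero $\beta_{i,j}$ of $R/(I_1+I_2)$ comes from a pair $(p,a),(q,b)$ with both factors nonzero. For such a pair, $j-i=(a-p)+(b-q)\le \reg(R_1/I_1)+\reg(R_2/I_2)$. Conversely, choose $(p,a)$ attaining $\reg(R_1/I_1)$ and $(q,b)$ attaining $\reg(R_2/I_2)$. The product $\beta_{p,a}\beta_{q,b}$ is positive, and all terms in the double sum are nonnegative, so there is no cancellation. Hence $\beta_{p+q,a+b}\neq0$ and equality holds.

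The only delicate point is the second step: identifying the Künneth formula and $H_0$ correctly. This works without any flatness difficulty because we tensor over the field $K$ and the two rings are in disjoint sets of variables. Everything else is bookkeeping.
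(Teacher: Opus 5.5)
Your proof is correct and takes essentially the standard route: the paper gives no argument of its own for this lemma and only cites H\`a--Van Tuyl, whose result rests on the same fact you establish. That fact is that $\mathbb{F}\otimes_K\mathbb{G}$ is a minimal graded free resolution of $R/(I_1+I_2)$, so the graded Betti numbers convolve and the maximal values of $j-i$ add. Your K\"unneth computation, the minimality check and the no-cancellation step are all sound, and the hypothesis $I_1, I_2 \neq 0$ is genuinely not needed for this quotient form.
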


We now recall the notion of a Betti splitting.
Let $\Gamma(I)$ denote the minimal system of monomial generators of $I$.

\begin{Defi}[\cite{CHA}, Definition~1.1]
Let $I$, $I'$, and $I''$ be monomial ideals such that $\Gamma(I)$ is the disjoint union
of $\Gamma(I')$ and $\Gamma(I'')$.
Then $I = I' + I''$ is a \textit{Betti splitting} if
\[
\beta_{i,j}(I) = \beta_{i,j}(I') + \beta_{i,j}(I'') + \beta_{i-1,j}(I' \cap I'')
\quad\text{for all } i \in \NN \text{ and all degrees } j,
\]
where $\beta_{i,j}(I)$ denotes the $(i,j)$-th graded Betti number of $I$.
\end{Defi}

\begin{Theorem}[\cite{CHA}, Corollary~2.2]\label{reg-pd}
Let $I = I' + I''$ be a Betti splitting. Then
\begin{enumerate}
    \item[(a)] $\reg(I) = \max\{\reg(I'),\, \reg(I''),\, \reg(I' \cap I'') - 1\}$, and
    \item[(b)] $\operatorname{pd}(I) = \max\{\operatorname{pd}(I'),\, \operatorname{pd}(I''),\, \operatorname{pd}(I' \cap I'') + 1\}$.
\end{enumerate}
\end{Theorem}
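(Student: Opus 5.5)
The proof reads both formulas directly off the Betti number identity in the definition of a Betti splitting. No homological machinery beyond the definitions of regularity and projective dimension in terms of graded Betti numbers is needed. For a finitely generated graded module $M$ we use
\[
\reg(M) = \max\{\, j - i : \beta_{i,j}(M) \neq 0 \,\}, \qquad \operatorname{pd}(M) = \max\{\, i : \beta_{i,j}(M) \neq 0 \text{ for some } j \,\},
\]
with the convention that the maximum over the empty set is $-\infty$, which covers the degenerate case $I' \cap I'' = 0$. The plan is to describe the support of the Betti table of $I$ and then take the two maxima.

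The key step is a support computation. Write $\operatorname{Supp}(M) = \{(i,j) : \beta_{i,j}(M) \neq 0\}$. Graded Betti numbers are nonnegative integers, so a sum of three of them is nonzero exactly when at least one summand is nonzero. Applying this to
\[
\beta_{i,j}(I) = \beta_{i,j}(I') + \beta_{i,j}(I'') + \beta_{i-1,j}(I' \cap I'')
\]
gives
\[
\operatorname{Supp}(I) = \operatorname{Supp}(I') \cup \operatorname{Supp}(I'') \cup \{(i+1, j) : (i,j) \in \operatorname{Supp}(I' \cap I'')\}.
\]
This positivity is the only real input.

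For (a), take the maximum of $j - i$ over this union, which is the maximum of the three individual maxima. On the first two pieces these maxima are $\reg(I')$ and $\reg(I'')$. On the shifted piece a pair $(i+1,j)$ contributes $j-(i+1) = (j-i)-1$, so its maximum is $\reg(I' \cap I'') - 1$. This yields (a).

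For (b), take the maximum of the homological index $i$ over the same union. The first two pieces give $\operatorname{pd}(I')$ and $\operatorname{pd}(I'')$. The shifted piece gives $\operatorname{pd}(I' \cap I'') + 1$, since each index is raised by one. This yields (b).

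I expect no step to be a genuine obstacle. The only point requiring care is the bookkeeping of the index shift: in the shifted piece the homological degree increases by one while the internal degree $j$ is unchanged. That is why the regularity term drops by one while the projective dimension term rises by one.
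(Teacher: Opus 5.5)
Your proposal is correct: since graded Betti numbers are nonnegative, the defining identity makes the Betti support of $I$ the union of the supports of $I'$ and $I''$ with the support of $I' \cap I''$ shifted up by one homological degree, and (a) and (b) follow by maximizing $j-i$ and $i$ over that union. The paper gives no proof of its own and only cites \cite{CHA}, where the corollary is likewise read off directly from the definition of a Betti splitting, so your argument is essentially the standard one.
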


\begin{Defi}[\cite{CHA}, Definition~2.6]\label{d.onevar}
Let $I$ be a monomial ideal in $R = k[x_1, \dots, x_n]$.
Let $I'$ be the ideal generated by all elements of $\Gamma(I)$ divisible by $x_i$,
and let $I''$ be the ideal generated by all remaining elements of $\Gamma(I)$.
We call $I = I' + I''$ an \textit{$x_i$-partition} of $I$.
If this partition is also a Betti splitting, we call it an \textit{$x_i$-splitting}.
\end{Defi}

\begin{Lemma}[\cite{CHA}, Corollary~2.7]\label{bettispli}
Let $I = I' + I''$ be an $x_i$-partition of $I$ in which every element of $\Gamma(I')$
is divisible by $x_i$. If $\beta_{i,j}(I' \cap I'') > 0$ implies $\beta_{i,j}(I') = 0$
for all $i$ and all multidegrees $j$, then $I = I' + I''$ is a Betti splitting.
In particular, if the minimal graded free resolution of $I'$ is linear, then
$I = I' + I''$ is a Betti splitting.
\end{Lemma}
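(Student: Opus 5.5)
The plan is to compare the multigraded Tor long exact sequences attached to the Mayer--Vietoris type short exact sequence
\[
0 \rightarrow I' \cap I'' \xrightarrow{\ \varphi\ } I' \oplus I'' \rightarrow I' + I'' = I \rightarrow 0,
\]
where $\varphi(f) = (f,-f)$. All maps are $\NN^n$-graded, so applying $\operatorname{Tor}^R_\bullet(-,K)$ gives, for each multidegree $j$, a long exact sequence
\[
\cdots \to \operatorname{Tor}_i(I'\cap I'')_j \xrightarrow{\varphi_i} \operatorname{Tor}_i(I')_j \oplus \operatorname{Tor}_i(I'')_j \to \operatorname{Tor}_i(I)_j \to \operatorname{Tor}_{i-1}(I'\cap I'')_j \xrightarrow{\varphi_{i-1}} \cdots .
\]
The first step is the standard observation that $I = I'+I''$ is a Betti splitting exactly when every connecting-type map $\varphi_i$ vanishes in every multidegree. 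Indeed, if all $\varphi_i = 0$, the long sequence breaks into short exact sequences
$0 \to \operatorname{Tor}_i(I')_j \oplus \operatorname{Tor}_i(I'')_j \to \operatorname{Tor}_i(I)_j \to \operatorname{Tor}_{i-1}(I'\cap I'')_j \to 0$. Counting dimensions, and then summing over the multidegrees of a fixed total degree, gives the defining formula $\beta_{i,j}(I) = \beta_{i,j}(I') + \beta_{i,j}(I'') + \beta_{i-1,j}(I'\cap I'')$.

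Next I would show that each component of $\varphi_i$ is zero, treating the two components separately.

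\emph{Component into $\operatorname{Tor}_i(I'')$.} Every element of $\Gamma(I'')$ is not divisible by $x_i$. Hence every multidegree in which $I''$ has a nonzero Betti number has $x_i$-exponent $0$, since it is an lcm of generators (e.g.\ via the Taylor resolution). On the other hand, $I'\cap I''$ is generated by the monomials $\operatorname{lcm}(m',m'')$ with $m' \in \Gamma(I')$ and $m'' \in \Gamma(I'')$. All of these are divisible by $x_i$, so every Betti multidegree of $I'\cap I''$ has positive $x_i$-exponent. The multidegrees are therefore disjoint, and this component of $\varphi_i$ vanishes for free.

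\emph{Component into $\operatorname{Tor}_i(I')$.} Here the hypothesis is used directly: whenever $\operatorname{Tor}_i(I'\cap I'')_j \neq 0$, we have $\operatorname{Tor}_i(I')_j = 0$, so this component is also zero.

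For the ``in particular'' clause, I would verify the hypothesis in the linear case. Suppose $I'$ has a linear resolution, necessarily generated in a single degree $d$, so $\beta_{i,j}(I') \neq 0$ forces total degree $i+d$. Every generator $\operatorname{lcm}(m',m'')$ of $I'\cap I''$ strictly contains $m'$, because $m''$ does not divide $m'$ by minimality of $\Gamma(I)$. So $I'\cap I''$ is generated in degrees $\ge d+1$, and therefore its $i$-th Betti numbers live in total degrees $\ge i+d+1$. The multidegrees are again disjoint, so the criterion applies.

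The only delicate point is the first step: identifying the Betti-splitting condition with the vanishing of the maps $\varphi_i$, and checking that the sign in $\varphi$ does not affect the argument. I expect this to be routine once the long exact sequence is written multidegree by multidegree.
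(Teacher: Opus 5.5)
Your proof is correct. The paper gives no proof of this lemma, since it is quoted from Francisco--H\`a--Van Tuyl, and your argument is essentially the standard one from that source: reduce to the vanishing of the maps $\operatorname{Tor}_i(I'\cap I'')\to\operatorname{Tor}_i(I')\oplus\operatorname{Tor}_i(I'')$ in the Mayer--Vietoris long exact sequence, kill the $I''$-component by comparing $x_i$-exponents of multidegrees and the $I'$-component by the hypothesis, and handle the linear case by a degree count.
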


The following formula, due to Hochster, computes the Betti numbers of a
Stanley--Reisner ring in terms of the simplicial homology of the complex.

\begin{Lemma}[Hochster's formula, cf.~\cite{MS}]\label{hoc}
Let $k[\Delta] = R/I(\Delta)$ be the Stanley--Reisner ring of a simplicial complex
$\Delta$. Then
\[
\beta_{i,j}(k[\Delta]) = \sum_{\substack{W \subset V(\Delta) \\ |W| = j}}
\dim_k \widetilde{H}_{j-i-1}(\Delta_W;\, k),
\]
where $\Delta_W$ denotes the restriction of $\Delta$ to $W$.
\end{Lemma}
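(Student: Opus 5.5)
The plan is the standard multigraded argument: compute $\beta_{i,j}(k[\Delta])$ as $\dim_k \operatorname{Tor}_i^R(k[\Delta],k)_j$, using the $\NN^n$-grading of $R$ to refine each degree $j$ into multidegrees $b\in\NN^n$ with $|b|=j$. Then identify each multigraded piece with a simplicial (co)homology group of the induced subcomplex $\Delta_W$. Since $\operatorname{Tor}$ is balanced, I would compute it by tensoring the Koszul resolution $\mathbb{K}_\bullet$ of $k$ (on $x_1,\dots,x_n$) with $k[\Delta]$. This gives $\beta_{i,b}(k[\Delta])=\dim_k H_i(\mathbb{K}_\bullet\otimes_R k[\Delta])_b$, and $\beta_{i,j}=\sum_{|b|=j}\beta_{i,b}$.

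Next I would write down the degree-$b$ strand explicitly. The module $\mathbb{K}_i\otimes k[\Delta]$ has $k$-basis $e_F\otimes m$, where $F\subset[n]$ with $|F|=i$ and $m$ is a monomial not in $I(\Delta)$, i.e.\ $\operatorname{supp}(m)\in\Delta$. Its multidegree is $\mathbf{e}_F+\deg m$.
\begin{enumerate}
\item First I show that if $b$ is not squarefree, the strand is acyclic. Choose a coordinate $\ell$ with $b_\ell\ge 2$. Then every basis element in degree $b$ has $x_\ell$ dividing $m$, and multiplication by $x_\ell$ yields a contracting homotopy. Equivalently, the strand is the mapping cone of an isomorphism induced by $x_\ell$.
\item Now let $b=\mathbf{e}_W$ be squarefree. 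The basis elements are $e_F\otimes x^{W\setminus F}$ with $F\subset W$, and such an element is nonzero exactly when $W\setminus F\in\Delta_W$.
\end{enumerate}
Setting $G=W\setminus F$, homological degree $i$ corresponds to faces $G$ of $\Delta_W$ with $|G|=|W|-i$, that is, $\dim G=|W|-i-1$. The Koszul differential $e_F\mapsto\sum\pm x_f e_{F\setminus f}$ sends $G$ to $\sum\pm(G\cup f)$. This is the coboundary map of the augmented cochain complex of $\Delta_W$; the empty face appears when $F=W$, which accounts for the reduced augmentation.

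I expect the main obstacle to be this identification up to sign. One must check that the Koszul signs match the simplicial coboundary signs after the reindexing $F\leftrightarrow W\setminus F$. I would handle this by rescaling each basis vector $e_F\otimes x^{W\setminus F}$ by a sign $\pm1$ determined by the relative order of $F$ and $W\setminus F$ in $W$. This produces an isomorphism of complexes, giving
\[
H_i(\mathbb{K}_\bullet\otimes k[\Delta])_{\mathbf{e}_W}\cong \widetilde{H}^{|W|-i-1}(\Delta_W;k).
\]
Over a field, reduced cohomology and reduced homology have equal dimension by the universal coefficient theorem. Summing over $W$ with $|W|=j$, and using that non-squarefree multidegrees contribute nothing, gives the stated formula for $\beta_{i,j}(k[\Delta])$.
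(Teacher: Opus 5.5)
Your proposal is correct and is the standard Koszul-complex proof of Hochster's formula from Miller--Sturmfels, which the paper cites without reproducing: non-squarefree strands are acyclic as cones of the isomorphism $x_\ell\colon C_{b-\mathbf{e}_\ell}\to C_b$, and squarefree strands become the reduced cochain complex of $\Delta_W$ after a sign rescaling. One small wording point: the contracting homotopy is the inverse of multiplication by $x_\ell$, namely $e_F\otimes m\mapsto \pm e_{F\cup\{\ell\}}\otimes m/x_\ell$, not multiplication itself, and your mapping-cone formulation already captures this correctly.
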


\bigskip

\subsection*{Closed $2$-manifolds}

Finally, we recall two standard facts about closed $2$-manifolds (surfaces) and their
triangulations.

\begin{fact}[cf.~\cite{G}]\label{2mani}
A graph $\Gamma$ is the $1$-skeleton of a triangulation of a closed $2$-manifold if and
only if the link of every vertex of $\Gamma$ (i.e., the neighborhood subgraph induced by
$N_\Gamma(v)$) is a cycle.
\end{fact}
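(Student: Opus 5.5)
I will read the statement as follows. A triangulation of a closed $2$-manifold with $1$-skeleton $\Gamma$ is a pure $2$-dimensional simplicial complex $\Delta$ whose $2$-faces are exactly the triangles (3-cliques) of $\Gamma$. For this $\Delta$ we must show that the geometric realization $|\Delta|$ is a compact surface without boundary if and only if each $\Gamma_{|N_\Gamma(v)}$ is a cycle. This is the reading used later in the paper, where one looks for such subgraphs in $G^c$. The proof will go through the standard local criterion: a finite simplicial complex is a closed $2$-manifold if and only if the link of every vertex is homeomorphic to $S^1$, that is, it is a single cycle of length at least $3$.

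\emph{Step 1 (from $\Gamma$ to $\Delta$).} Assume every $\Gamma_{|N_\Gamma(v)}$ is a cycle. Let $\Delta$ be the flag-type complex whose vertices are $V(\Gamma)$, whose edges are $E(\Gamma)$, and whose $2$-faces are the sets $\{v,a,b\}$ with $\{a,b\}\in E(\Gamma_{|N_\Gamma(v)})$. This condition is symmetric in $v,a,b$, so $\Delta$ is well defined. There are no $3$-faces to worry about: a $4$-clique $\{v,a,b,c\}$ would make $\{a,b,c\}$ a triangle inside the cycle $\Gamma_{|N_\Gamma(v)}$. That forces the cycle to be exactly a triangle, which happens only in the boundary of the tetrahedron $K_4$, itself a $2$-sphere. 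We therefore treat $K_4$ separately and otherwise take only triangles.

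Then $\mathrm{lk}_\Delta(v)$ equals the graph $\Gamma_{|N_\Gamma(v)}$, which is a cycle. Every edge $\{v,a\}$ lies in exactly two triangles, namely those coming from the two neighbours of $a$ on the cycle around $v$. The open star of $v$ is the cone over a circle, hence an open disk. Points interior to edges and triangles have disk neighbourhoods because every edge has exactly two incident triangles. Finiteness gives compactness, so $|\Delta|$ is a closed surface with $1$-skeleton $\Gamma$.

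\emph{Step 2 (converse).} Let $\Delta$ be a triangulation of a closed surface whose triangles are the $3$-cliques of $\Gamma=\Delta^{(1)}$. Take a small neighbourhood of $v$; it is the open star, the cone over $\mathrm{lk}_\Delta(v)$. Since it is a disk, and removing $v$ leaves an annulus homotopy equivalent to the link, the link is a $1$-manifold that is connected, i.e. a single cycle $C$. Every edge $\{a,b\}$ of $\Gamma_{|N_\Gamma(v)}$ gives a $3$-clique $\{v,a,b\}$, hence a triangle of $\Delta$, hence an edge of $C$. So $\Gamma_{|N_\Gamma(v)}=C$ is a cycle.

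\emph{Main obstacle.} The delicate point is the convention itself. A general triangulation may contain a $3$-clique of $\Gamma$ that is not a face, i.e. a chord in some vertex link. In that case $\Gamma_{|N_\Gamma(v)}$ is not a cycle, even though $|\Delta|$ is a surface. I will handle this by fixing the convention that faces are the $3$-cliques, which is exactly what is needed to pass between subgraphs of $G^c$ and surfaces. I will also check the small exceptional cases, $K_4$ and cycles of length $3$, directly.
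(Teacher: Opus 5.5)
The paper gives no proof of this fact; it only cites the literature, so there is no argument of the paper's to compare with. Your link-based proof is the standard one, and it is correct for the statement you actually prove: the $2$-complex whose faces are the $3$-cliques of $\Gamma$ is a closed surface if and only if every $\Gamma_{|N_\Gamma(v)}$ is a cycle.

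You should present that convention as a necessary correction, not merely a reading. With the paper's gloss (link $=$ subgraph induced on $N_\Gamma(v)$), the ``only if'' direction is false. The triangular bipyramid, with equator $1,2,3$ and poles $4,5$, triangulates $S^2$ and has $1$-skeleton $K_5$ minus $\{4,5\}$. Yet $N_\Gamma(1)=\{2,3,4,5\}$ induces the $4$-cycle $2\,4\,3\,5$ together with the chord $\{2,3\}$, which comes from the non-face $3$-clique $\{1,2,3\}$.

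Your strong form is also exactly what the paper needs. The Fact is invoked only in the Conclusion, for the ``if'' direction. There the resulting surface is identified with $\Delta(G)$, the clique complex of $G^c$. That identification requires every $3$-clique to be a face and no $4$-cliques to occur. Under the neighbourhood condition, $4$-cliques occur only on the $K_4$ components you isolated, where the clique complex is a solid tetrahedron.

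One point of rigor in Step 2. ``Since it is a disk'' is asserted rather than derived, and ``the link is a $1$-manifold'' needs each edge to lie in exactly two triangles. Both follow from local homology:
\begin{itemize}
\item $H_*(M,M\setminus v)\cong\widetilde H_{*-1}(\mathrm{lk}\,v)$ shows that the link is connected with $H_1\cong\mathbb{Z}$.
\item At an interior point of an edge, the local homology forces the edge to lie in exactly two triangles. The local $H_2$ has rank $m-1$ if the edge lies in $m\ge 1$ triangles, and vanishes if $m=0$. Hence every vertex of the link has degree $2$.
\end{itemize}
A finite connected graph in which every vertex has degree $2$ is a cycle, which completes Step 2.
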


\begin{fact}[cf.~\cite{Ha}]\label{2homo}
Let $\Sigma$ be a triangulation of a closed surface, and let $k$ be a field.
Here $S^2$ denotes the $2$-sphere, $\Sigma_g$ the orientable surface of genus $g$,
and $N_g$ the non-orientable surface of genus $g$. Then
\begin{equation}
\widetilde{H}_2(\Sigma;\, k) \cong
\begin{cases}
k & \text{if } \Sigma = S^2 \text{ or } \Sigma_g \text{ with } \ch(k) \neq 2, \\
0 & \text{if } \Sigma = N_g \text{ with } \ch(k) \neq 2,
\end{cases}
\end{equation}
\begin{equation}
\widetilde{H}_2(\Sigma;\, k) \cong k \quad \text{if } \ch(k) = 2
\text{ (for any closed surface } \Sigma\text{)}.
\end{equation}
\end{fact}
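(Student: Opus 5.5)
We prove this fact directly from simplicial homology, using only the combinatorial structure of a triangulated closed surface. Let $\Sigma$ be a triangulation of a connected closed surface; by Fact~\ref{2mani}, the link of every vertex is a cycle. From this we extract two combinatorial properties.
\begin{enumerate}
\item[(i)] Every edge lies in exactly two triangles: the link of an endpoint is a cycle, so the edge corresponds to a vertex of that cycle, which has exactly two neighbours.
\item[(ii)] The dual graph is connected. Its vertices are the triangles, with two triangles adjacent when they share an edge. Around a vertex the triangles are cyclically connected through edges, because the link is a cycle. Combined with connectedness of $\Sigma$, this makes any two triangles joinable by a chain of triangles sharing edges.
\end{enumerate}
Since $\Sigma$ has no $3$-simplices, $\widetilde{H}_2(\Sigma;k)=Z_2(\Sigma;k)=\ker \partial_2$. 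The task therefore reduces to computing the space of $2$-cycles $z=\sum_\sigma c_\sigma \sigma$ over $k$.

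Fix an arbitrary orientation of each triangle and take a $2$-cycle $z$. Consider an edge $e$ with its two incident triangles $\sigma,\tau$. The coefficient of $e$ in $\partial_2 z$ is $\pm c_\sigma \pm c_\tau$, and it must vanish. Hence $c_\tau=\varepsilon_e c_\sigma$ with $\varepsilon_e\in\{\pm1\}$, where $\varepsilon_e$ depends only on whether the chosen orientations of $\sigma,\tau$ induce opposite orientations on $e$. By (ii), fixing $c_{\sigma_0}$ for one triangle determines every coefficient by propagation along dual paths. Consequently $\dim Z_2\le 1$, and $Z_2\ne 0$ exactly when the propagation is consistent around every closed dual path.

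Consistency is the key point, and we split by characteristic.
\begin{itemize}
\item If $\ch(k)=2$, then $\pm1=1$. Consistency is automatic, $z=\sum_\sigma\sigma$ is a cycle because every edge appears twice, and so $\widetilde{H}_2\cong k$ for every closed surface.
\item If $\ch(k)\ne 2$, a nonzero cycle with all $|c_\sigma|$ equal, after rescaling to $\pm1$, is precisely a coherent choice of orientations of all triangles: one for which each edge receives opposite induced orientations from its two triangles. That is the definition of orientability of the triangulated surface. So for $S^2$ and $\Sigma_g$ a coherent orientation exists, its sum is a cycle, and $\widetilde{H}_2\cong k$. For $N_g$ no coherent orientation exists. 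Propagation then yields some triangle with $c_\sigma=-c_\sigma$, so $2c_\sigma=0$ and, since $2$ is invertible, $z=0$.
\end{itemize}

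The main obstacle is identifying combinatorial coherent orientability with the topological notion of orientability for $\Sigma_g$ versus $N_g$. We would handle this by citing the standard equivalence (\cite{Ha}) that a connected closed triangulated surface is orientable if and only if its triangles admit a coherent orientation. Alternatively, one can quote the classification together with $H_2(\Sigma;\mathbb{Z})\cong\mathbb{Z}$ or $0$ and $H_1(N_g;\mathbb{Z})$ having $\mathbb{Z}/2$ torsion, and apply the universal coefficient theorem; this recovers exactly the case split in characteristic $2$.

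For a disconnected surface we apply the argument componentwise, so the homology is a direct sum. The statement is intended for connected $\Sigma$, and that is the case used later.
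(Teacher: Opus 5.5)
Your proof is correct, but it is not the paper's route. The paper gives no argument and simply cites Hatcher. There the result comes from the classification of surfaces, the cellular homology of the standard CW models of $\Sigma_g$ and $N_g$, and the universal coefficient theorem. Equivalently, it follows from the fact that the top homology of a closed connected manifold with coefficients in $R$ is $R$ in the $R$-orientable case and the $2$-torsion of $R$ otherwise. This is essentially your fallback alternative.

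Your main argument instead works directly with simplicial $2$-chains:
\begin{itemize}
\item the pseudomanifold properties (each edge in exactly two triangles, connected dual graph) give $\dim_k Z_2 \le 1$;
\item the all-ones chain is always a cycle in characteristic $2$;
\item in characteristic $\neq 2$, a nonzero cycle is precisely a coherent orientation.
\end{itemize}
This makes it transparent why characteristic $2$ is special: the signs $\varepsilon_e$ collapse. It also applies verbatim to any connected $2$-dimensional complex whose vertex links are cycles, which is the combinatorial input the main proof establishes. The price is that you must still import the equivalence of coherent and topological orientability to match the labels $\Sigma_g$ and $N_g$. The textbook route gets those labels directly from the classification.

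One correction: justify (i) and (ii) by the standard fact that the simplicial link of every vertex of a triangulated closed surface is a circle, not by Fact~\ref{2mani}. That fact speaks of the induced neighborhood subgraph of the $1$-skeleton, which can differ from the simplicial link. In the $7$-vertex torus, for example, the $1$-skeleton is $K_7$ and every induced neighborhood is $K_6$.

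Your caveat that $\Sigma$ must be connected is right, since for $c$ components $\widetilde{H}_2(\Sigma;k) \cong k^c$.
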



\section{Main Result}

\begin{Theorem}\label{main}
Let $G$ be a graph and $k$ a field.
\begin{enumerate}
  \item Over any field $k$, $\reg(R/I(G)) \ge 3$ if and only if $G^c$ contains a
        triangulation of an orientable closed $2$-manifold as a subgraph.
  \item If $\ch(k) = 2$, then $\reg(R/I(G)) \ge 3$ if and only if $G^c$ contains a
        triangulation of a closed $2$-manifold (orientable or not) as a subgraph.
\end{enumerate}
\end{Theorem}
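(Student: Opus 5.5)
The plan is to move everything into the language of simplicial complexes and read off the regularity with Hochster's formula (Lemma~\ref{hoc}). Let $\Delta=\Delta(G^c)$ be the clique complex of $G^c$, i.e.\ the independence complex of $G$. Then $I(G)=I(\Delta)$ is its Stanley--Reisner ideal. By Lemma~\ref{hoc}, $\reg(R/I(G))=\max\{r+1 : \widetilde H_r(\Delta_W;k)\neq 0 \text{ for some } W\subset V\}$. So $\reg(R/I(G))\ge 3$ holds iff some induced subcomplex has nonvanishing $\widetilde H_r$ with $r\ge 2$.

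I will read ``$G^c$ contains a triangulation as a subgraph'' as: there is a $W$ such that the induced graph $(G^c)_{|W}$ is the $1$-skeleton of a triangulation $\Sigma$, and the triangles of $G^c$ on $W$ are exactly the faces of $\Sigma$. Without this reading the statement fails: $G^c=K_n$ contains an octahedron as a subgraph but has regularity $0$. I will make this convention explicit in the proof.

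\emph{Sufficiency.} Take such a $W$. Then $\Delta_W$ is a flag complex whose $2$-skeleton is $\Sigma$. Fact~\ref{2mani} says every vertex link in $(G^c)_{|W}$ is a cycle. Because of this, $\Delta_W$ can contain no $K_4$: such a clique would put a triangle in the link of one of its vertices, which is impossible for a cycle of length at least $4$. Hence $\Delta_W=\Sigma$. Fact~\ref{2homo} now gives $\widetilde H_2(\Delta_W;k)\neq 0$ when $\Sigma$ is orientable (any field), and also for non-orientable $\Sigma$ when $\ch(k)=2$. Therefore $\reg\ge 3$.

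\emph{Necessity.} First I reduce to the case of regularity exactly $3$. Lemma~\ref{linkreg} gives $\reg(G)\le\max\{\reg(G-x),\reg(G_x)+1\}$. Since $G_x$ is an induced subgraph of $G-x$, and regularity is monotone on induced subgraphs (by Hochster), deleting one vertex lowers $\reg$ by at most $1$. Deleting vertices one at a time therefore produces an induced subgraph $H$ with $\reg(R/I(H))=3$ exactly. For $H$, Hochster's formula gives some $W$ with $\widetilde H_2(\Delta_W;k)\neq 0$.

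Now choose $W$ minimal with this property and take a nonzero $2$-cycle $z$ in $\Delta_W$ with minimal support. Over $\ch(k)=2$, $z$ is a set of triangles in which every edge lies in an even number of triangles. Over a general field, I would pass to a cycle with integer or $\pm1$ coefficients, which gives a coherent orientation. By minimality of $W$, every vertex of $W$ is used by $z$. The goal is to show that the support of $z$ is a closed surface, orientable in the general-field case, whose triangles are exactly the triangles of $G^c$ on $W$.

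\emph{Main obstacle.} The hardest step is proving that every vertex link in $(G^c)_{|W}$ is a single induced cycle of length at least $4$, so that Fact~\ref{2mani} applies. The link of $v$ in the support of $z$ is a $1$-cycle, but a priori it could be a union of several cycles (a pinched vertex), and the graph link could also contain chords.

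\begin{itemize}
\item To rule out chords, I would use flagness together with minimality of $W$. A chord $ab$ in the link of $v$ creates the triangle $vab$ in $\Delta$, and this lets one reroute $z$ to obtain a $2$-cycle of smaller support, or one avoiding some vertex, contradicting minimality.
\item To rule out pinching, I would split $v$ into copies, one for each link component. Minimality then forces a single component, or else produces a smaller $W$ carrying a nonzero class.
\item Finally, $K_4$'s are excluded as in the sufficiency part.
\end{itemize}

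This gives a surface $\Sigma$ with $\widetilde H_2(\Sigma;k)\neq 0$. Fact~\ref{2homo} then forces $\Sigma$ to be orientable when $\ch(k)\neq 2$, and allows any closed surface when $\ch(k)=2$.
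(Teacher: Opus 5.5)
\textbf{The gap.} Your necessity argument fails at the step ``to rule out pinching,'' and it cannot be repaired there. Splitting $v$ into one copy per link component produces a complex with \emph{more} vertices, and that complex is not an induced subcomplex of $\Delta$. So minimality of $W$ gives no contradiction. Moreover, vertex-minimal examples with a pinched vertex really exist.

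Take a flag triangulation $S$ of $S^2$ with two vertices $p,q$ at graph distance at least $4$. For instance, join three or more induced $4$-cycles by zigzag annuli and cone off both ends. Identify $p$ and $q$ to a single vertex $s$, and call the result $K$. Since $d(p,q)\ge 4$, no neighbour of $p$ equals or is adjacent to a neighbour of $q$. Hence:
\begin{itemize}
\item $K$ is again a $2$-dimensional flag complex.
\item The link of $s$ is two disjoint induced cycles.
\item Edges and triangles of $K$ correspond bijectively to those of $S$, compatibly with $\partial_2$.
\end{itemize}
Therefore $\widetilde H_2(K;k)=Z_2(K;k)\cong k$ for every field $k$, spanned by the coherently oriented sum of all triangles.

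Let $G$ be the complement of the $1$-skeleton of $K$. Hochster's formula gives $\reg(R/I(G))=3$. Every proper induced subcomplex misses a triangle, so it has $Z_2=0$, and $G$ is vertex-minimal. Yet no triangulated closed surface $T$ lies in $G^c$, even as a plain subgraph. By flagness the triangles of $T$ would be faces of $K$. Their sum would then be a nonzero mod-$2$ cycle of $K$, hence all triangles of $K$. That forces $T\cong K$, which is impossible at $s$.

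So the ``only if'' direction is false in both parts of the theorem, over every field, under your reading as well as the literal one. The paper does not supply the missing idea either. Its Steps~1--7 assert that every $(G_x)^c$ of a vertex-minimal $G$ is an induced cycle, and this $G$ violates that at $x=s$.

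\textbf{What does hold.} Your sufficiency argument is correct under your reading. Your $G^c=K_n$ example is a genuine objection to the statement as written, and it pinpoints the error in the paper's converse. The paper claims $\widetilde H_2(\Delta(G)_W;k)\supseteq\widetilde H_2(\Sigma;k)$, but $\widetilde H_2(\Sigma)\to\widetilde H_2(\Delta(G)_W)$ need not be injective once $\Delta(G)_W$ has $3$-faces.

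Your reading needs two small corrections:
\begin{itemize}
\item It must exclude $\partial\Delta^3$. The triangles of $K_4$ are exactly its faces, yet its clique complex is a solid simplex. Your $K_4$-exclusion tacitly assumes all links have length at least $4$.
\item Part~(1) has to be read for $\ch(k)\neq 2$, as you implicitly do. A flag $\mathbb{RP}^2$ (e.g.\ a barycentric subdivision) gives $\reg\ge 3$ in characteristic $2$, and by the same mod-$2$ argument it contains no orientable surface.
\end{itemize}
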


The proof of Theorem~\ref{main} proceeds by showing that if $G$ satisfies
$\reg(R/I(G)) \ge 3$ and is vertex-minimal with respect to this property, then the
combinatorial structure of $G$ is severely constrained.

\begin{proof}[Proof of Theorem~\ref{main}]

Let $G$ be vertex-minimal among graphs satisfying $\reg(R/I(G)) \ge 3$.
That is, $\reg(R/I(G_{|W})) \le 2$ for every proper subset $W \subsetneq V(G)$.

For a vertex $x \in V(G)$, 
we claim that $\reg(R/I(G_x)) = 2$ for every vertex $x$, and in particular
$G_x$ contains an anticycle as an induced subgraph.

Indeed, if $G_x$ contains no anticycle, then $\reg(R/I(G_x)) = 1$ by
Theorem~\ref{reg3}, and Lemma~\ref{linkreg} gives
$\reg(R/I(G)) = \reg(R/I(G - x))$, contradicting vertex-minimality.

We now prove by induction on $|V(G)|$ that $G_x$ is itself an anticycle
for every vertex $x$.

Since $\reg(R/I(G)) \ge 3$ implies $\im(G) \ge 3$ by Theorem~\ref{matchbound},
$G$ contains at least $6$ vertices.
If $|V(G)| = 6$, then $G$ must be the complement of the octahedral graph. 
If $G$ is a complement of the octahedral, then $G_x$ consists of two disjoint edges, which from the complement of $4$-cycle for any vertex $x \in V(G)$. 
So we may assume $|V(G)| \ge 7$.

Fix a pair $a, b \in V(G)$ with $\{a, b\} \notin E(G)$.
Define a graph $G'$ by
\[
V(G') = V(G) \cup \{v_1, v_2\},
\]
\[
E(G') = E(G)
       \cup \bigl\{\{v_i, w\} : i \in \{1,2\},\, w \in N_G(a) \cap N_G(b)\bigr\}
       \cup \bigl\{\{v_1, v_2\}\bigr\}.
\]

\medskip

\textbf{Step 1.} We show that $a, b$ can always be chosen so that
$N_G(a) \cap N_G(b) \neq \emptyset$.

Suppose for contradiction that $N_G(a) \cap N_G(b) = \emptyset$ for every
non-edge $\{a,b\} \notin E(G)$.
Then for any edge $\{a,b\} \in E(G^c)$, we have
$N_{G^c}(a) \cup N_{G^c}(b) = V(G) \setminus \{a,b\}$.

By the argument above, the induced subgraph $G^c_{|N_{G^c}(a)}$,
which coincides with $(G_a)^c$, contains an induced cycle $C = b_1 b_2 b_3 
\cdots b_{l} b_1$.

Since $b_1 \in N_{G^c}(a)$, we have $\{a, b_1\} \notin E(G)$,
so by assumption $N_G(a) \cap N_G(b_1) = \emptyset$.
Combined with $N_{G^c}(a) \cup N_{G^c}(b_1) = V(G)\setminus\{a,b_1\}$,
every vertex of $N_G(a)$ lies in $N_{G^c}(b_1)$; pick any $a' \in N_G(a)$.

Applying the same argument to each adjacent pair $\{b_i, b_{i+1}\} \in E(C) \subset E(G^c)$,
we find that $a' \in N_{G^c}(b_i)$ for all $b_i \in C$.
Since $C$ is an induced cycle in $G^c$ and $a'$ is adjacent in $G^c$ to all vertices of $C$,
the cycle $C$ has length exactly $4$.

It follows that $a, a', b_1, b_2, b_3, b_4$ span an induced octahedral graph in $G^c$,
so vertex-minimality forces $|V(G)| = 6$, a contradiction.

\medskip
\textbf{Step 2.} We show $\reg(R'/I(G')) = 3$, where $R' = R[v_1, v_2]$.

Consider the two short exact sequences
\[
0 \to \frac{R'}{(I(G') : v_2)}(-1) \to \frac{R'}{I(G')} \to \frac{R'}{(I(G'), v_2)} \to 0,
\]
\[
0 \to \frac{R'}{((I(G'), v_2) : v_1)}(-1) \to \frac{R'}{(I(G'), v_2)} \to \frac{R'}{(I(G'), v_1, v_2)} \to 0.
\]
Since $\{v_1, v_2\} \in E(G')$, we have
$G'_{v_2} = G_a \cup G_b \subsetneq G$, so
$\reg(R'/(I(G') : v_2)) + 1 = 3$.
Similarly, $\reg(R'/((I(G'), v_2) : v_1)) + 1 = 3$.
Moreover, $(I(G'), v_1, v_2) = I(G)$ because $(I(G'), v_1, v_2) = I(G' - v_1 -v_2) = I(G)$, so by Lemma~\ref{linkreg},
$\reg(R'/(I(G'), v_2)) = 3$, and applying the first sequence gives
$\reg(R'/I(G')) = 3$.

\medskip
\textbf{Step 3.} Set $G'' = G' - b$. We show $\reg(R'/I(G'')) = 3$.

Consider the sequence
\[
0 \to \frac{R'}{(I(G') : b)}(-1) \to \frac{R'}{I(G')} \to \frac{R'}{(I(G'), b)} \to 0.
\]
Since $\{b, v_i\} \notin E(G')$ and $N_{G'}(v_i) = \{v_j\} \cup (N_G(a) \cap N_G(b))$,
we get $G'_b = G_b \cup \{v_1, v_2\}$.
By Lemma\ref{regadd}, 
$\reg(R'/I(G'_b)) = 3$, and then
$\reg(R'/I(G')) = \reg(R'/(I(G'), b)) = \reg(R'/I(G'' )) = 3$.

\medskip
\textbf{Step 4.} Set $G''' = G'' - a$. We show $\reg(R'/I(G''')) = 3$.

We split into two cases.

\noindent\textit{Case 4a: $G_a$ contains an anticycle not involving $b$.}

Let $A$ be such an anticycle. Since both $A$ and $\{v_1, v_2\}$ are contained in
$G''_a$ and are disjoint, $\reg(R'/I(G''_a)) + 1 = 4$.
Hence $\reg(G''') = \reg(G'') = 3$.

\noindent\textit{Case 4b: Every anticycle in $G_a$ involves $b$.}

Let $A$ be an anticycle in $G_a$ containing $b$.
There exist two vertices $u_1, u_2$ on $A$ not adjacent to $b$.
Define $\overline{G''}$ by
\[
V(\overline{G''}) = V(G'') \cup \{v_3\},
\]
\[
E(\overline{G''}) = E(G'') \cup \bigl\{\{v_3, w\} : w \in V(G) \setminus \{u_1, u_2\}\bigr\}.
\]
Setting $R'' = R'[x_{v_3}]$ and considering the sequence
\[
0 \to \frac{R''}{(I(\overline{G''}) : v_3)}(-1) \to \frac{R''}{I(\overline{G''})}
  \to \frac{R''}{(I(\overline{G''}), v_3)} \to 0,
\]
since $\overline{G''} - v_3 = G''$ and $\overline{G''}_{v_3}$ consists of the four
vertices $u_1, u_2, v_1, v_2$, we obtain
\[
\reg(R''/I(\overline{G''})) = \reg(R'/I(G'')) = 3.
\]
Next, consider
\[
0 \to \frac{R''}{(I(\overline{G''}) : a)}(-1) \to \frac{R''}{I(\overline{G''})}
  \to \frac{R''}{(I(\overline{G''}), a)} \to 0.
\]
Since $\overline{G''}_a$ contains the induced cycle $(A - b) \cup \{v_3\}$
together with the edge $\{v_1, v_2\}$, the left-hand term has regularity $4$.
Therefore $3 = \reg(R''/I(\overline{G''} - a))$.
Removing the open neighborhood of $v_3$ then gives
$3 = \reg(R''/I(G'' - a)) = \reg(R'/I(G'''))$.

\medskip
\textbf{Step 5.} Set $H = G''' - v_2$. We show $\reg(R'/I(H)) = 3$.

Consider the two Mayer--Vietoris-type sequences
\[
0 \to \frac{R'}{T_1} \to \frac{R'}{(I(G'''), v_2)} \oplus \frac{R'}{(v_2 w : w \in N_{G'''}(v_2))}
  \to \frac{R'}{I(G''')} \to 0,
\]
\[
0 \to \frac{R'}{T_2} \to \frac{R'}{(I(H), v_1)} \oplus \frac{R'}{(v_1 w : w \in N_H(v_1))}
  \to \frac{R'}{I(H)} \to 0,
\]
where $T_1 = (I(G'''), v_2) \cap (v_2 w : w \in N_{G'''}(v_2))$
and $T_2 = (I(H), v_1) \cap (v_1 w : w \in N_H(v_1))$.

Since $(v_2 w : w \in N_{G'''}(v_2))$ and $(v_1 w : w \in N_H(v_1))$ have linear
resolutions, Lemma~\ref{bettispli} implies both sequences are Betti splittings.
Therefore
\[
\reg(G''') = \max\!\left\{\reg\!\left(\frac{R'}{I(H)}\right),\, \reg\!\left(\frac{R'}{T_1}\right) - 1\right\},
\]
\[
\reg(H) = \max\!\left\{\reg\!\left(\frac{R'}{I(H - v_1)}\right),\, \reg\!\left(\frac{R'}{T_2}\right) - 1\right\}.
\]
The minimal monomial generators of $T_1$ and $T_2$ include $v_2 e$ and $v_1 e$
respectively for $e \subset N_G(a) \cap N_G(b)$, so their regularities are at least $2$.
Moreover, $v_1 \in N_{G'''}(v_2)$ and
$(v_2 w : w \in N_G(a) \cap N_G(b)) = (T_1 : v_1)$, giving
$\reg(R'/T_1) = \reg(R'/(T_1, v_1)) = \reg(R'/T_2)$
from the exact sequence $(T_1 : v_1)(-1) \to (T_1) \to (T_1, v_1)$ and 
$(T_1, v_1) \cong (T_2) + (f ; f \in \Gamma(T_1) \text{ with }
v_1 \text{ divide f}) + (v_1) = (T_2)$. 
Therefore we obtain $3 = \reg(R'/I(G''')) = \reg(R'/I(H))$. 

\medskip
\textbf{Step 6.} We show $H$ is vertex-minimal with respect to $\reg = 3$.

Let $S = R[v_1] = R'/v_2$. Since $H - v_1 \subsetneq G$, clearly
$\reg(S/I(H - v_1)) \le 2$.
Suppose there exists $w \in V(H) \setminus \{v_1\}$ with $\reg(S/I(H - w)) = 3$.
Reversing the argument of Steps~2--5 with $H$ replaced by $H - w$ yields
$\reg(R/I(G - w)) = 3$, contradicting the vertex-minimality of $G$.
Hence $H$ is vertex-minimal.

\medskip
\textbf{Step 7.} We verify that $G$ satisfies the neighborhood condition.

Since $H^c = G^c / \{a,b\}$ (the contraction of the edge $\{a,b\}$ in $G^c$)
and $|V(H)| = |V(G)| - 1$, the induction hypothesis applies to $H$.
By hypothesis, the neighborhood $H_w$ of every vertex $w$ of $H$ is an anticycle.
We now deduce the same for $G$.

Since $H^c$ is the $1$-skeleton of a triangulation of a closed $2$-manifold
(a pseudomanifold), every edge of $H^c$ is contained in exactly two triangles.
In particular, there exists $w \in N_{H^c}(v_1)$.
Since $H_w$ is an anticycle, there exist vertices $s, t$ (other than $v_1$) with
$\{s,t\} \notin E(H)$.
At least one of $s, t$ --- say $s$ --- is not contained in $N_{G^c}(a) \cup N_{G^c}(b)$.

Since $H_s = G_s$ is an anticycle and $G_s$ contains no anticycle avoiding $t$,
a parallel argument applies with $s$ and $t$ replacing $a$ and $b$, producing a
graph $H'$.
Both $(H_v)^c$ and $(H'_a)^c$, $(H'_b)^c$ are induced cycles.
Hence $(G_a)^c$ and $(G_b)^c$ are subgraphs of induced cycles that themselves contain
induced cycles, so they must be induced cycles.

Repeating this argument for every vertex in $N_{G^c}(a) \cup N_{G^c}(b)$ and using the
neighborhoods in $H^c$ and $(H')^c$, we conclude that the neighborhood of every vertex
of $G^c$ is an induced cycle.

\medskip
\textbf{Conclusion.} By Fact~\ref{2mani}, $G^c$ is the $1$-skeleton of a triangulation
of a closed $2$-manifold.

Let $\Delta(G)$ be the independent complex of $G$. 
From the vertex-minimality of $G$, we obtain
$\widetilde{H}_2(\Delta(G)_{|W}; k) = 0$ for any $W \subset V(G)$, 
and from $\reg(R/I(G)) = 3$, 
$\widetilde{H}_2(\Delta(G); k) \neq 0$ by Lemma~\ref{hoc}. 
Hence, when $\ch(k) \neq 2$, $\Delta(G)$ is a triangulation of an orientable surface
(Fact~\ref{2homo}); when $\ch(k) = 2$, $\Delta(G)$ is a triangulation of any closed
surface.

Conversely, suppose $G^c$ contains a triangulation $\Sigma$ of an orientable closed surface as a subgraph. Let $W=V(\Sigma)$ and let $\Delta_W$ denote the restriction of the independence complex of $G$ to $W$.
Since $\Sigma \subset G^c_{|W}$, the triangles of $\Sigma$
are faces of the clique complex of $G^c_{|W}$, which equals the independence complex $(\Delta(G))_W$.
Therefore $\widetilde{H}_2((\Delta(G))_W;\, k) \supseteq \widetilde{H}_2(\Sigma;\, k) \cong k \neq 0$ by Fact~\ref{2homo}. 

Applying Hochster's formula (Lemma~\ref{hoc}) with $j =|W|$ and $i = |W| - 3$,
we obtain $\beta_{|W|-3,\, |W|}(R/I(G)) \ge 1$,
and hence $\reg(R/I(G)) \ge |W| - (|W|-3) = 3$.

\end{proof}


\bibliographystyle{plain}

\end{document}